\newtheorem{theorem}{Theorem}
\theoremstyle{plain}
\newtheorem{corollary}{Corollary}
\newtheorem{lemma}{Lemma}
\newtheorem{remark}{Remark}
\numberwithin{equation}{section}
\begin{document}
\title{Uniqueness of one-dimensional N\'{e}el wall {profiles}}
\author{Cyrill B. Muratov}
\address{Department of Mathematics, 
New Jersey Institute of Technology, 
Newark, NJ 07102, USA}
\email{muratov@njit.edu}
\urladdr{http://m.njit.edu/\symbol{126}muratov}
\thanks{}
\author{Xiaodong Yan}
\address{Department of Mathematics, 
University of Connecticut, 
Storrs, CT 06269}
\email{xiaodong.yan@uconn.edu}
\urladdr{http://www.math.uconn.edu/\symbol{126}xiaodong}
\thanks{X.Y. would like to thank Christof Melcher for helpful discussions. {%
The work of C.B.M. was supported, in part, by NSF via grants DMS-0908279 and
DMS-1313687.}}
\date{\today }
\subjclass[2000]{Primary \ 35A02, 78A25 ; Secondary 35Q60, 82D40}
\keywords{uniqueness, critical point, domain wall}

\begin{abstract}
  We study the domain wall structure in thin uniaxial ferromagnetic
  films in the presence of an in-plane applied external field in the
  direction normal to the easy axis. Using the reduced one-dimensional
  thin film micromagnetic model, we {analyze} the critical points of
  the obtained non-local variational problem. We prove {that the
    minimizer of the one-dimensional energy functional in the form of
    the N\'{e}el wall is the unique (up to translations) critical
    point of the energy among all monotone profiles with the same
    limiting behavior at infinity. Thus, we establish} uniqueness of
  the one-dimensional monotone N\'{e}el wall profile in {the
    considered} setting. We also obtain some uniform estimates for
  general one-dimensional domain wall profiles.
\end{abstract}

\maketitle

\section{\protect\bigskip Introduction}

\bigskip

{Thin soft} ferromagnetic films have been widely used as a data storage
solution in modern computer technology {{\cite{HS,Moser,eleftheriou10}}}. It
is well established that for sufficiently thin films, the magnetization
vector of the material lies almost entirely in the film plane. {Such
ultra-thin ferromagnetic films often} exhibit magnetization patterns
consisting of domains in which the magnetization vector is nearly constant {%
and is aligned along one of the directions of the material's easy axis.}
Domains {with different orientation of the magnetization are separated} by
thin transition layers called \emph{\ domain walls}, in which the
magnetization vector {rotates} rapidly from one direction to another.

The study of the domain wall structure in ferromagnetics materials has
attracted a lot of attention. One of the common domain wall types in
ultrathin ferromagnetic films is the \emph{N\'{e}el wall}. In this wall
type, the magnetization vector exhibits an in-plane 180$^{\circ }$ rotation
in the absence of an applied magnetic field. At present, the structure of
the N\'{e}el wall is rather well understood. Within the framework of
micromagnetic modeling, the overall physical picture has been summarized in
books \cite{Ah} and \cite{HS} (see also \cite{GC1,Mi,RS,MO}, etc.).
Experimental {observations} of the one-dimensional N\'{e}el wall profiles
can be found in \cite{BO, JAB,WL}. Rigorous mathematical analysis of N\'{e}%
el wall is more recent, starting from the work of Garc\'{\i}a-Cervera on the
analysis of the associated one-dimensional variational problem \cite{GC,GC1}%
. Melcher studied one-dimensional energy minimizers in thin uniaxial films
and obtained symmetry, monotonicity of the one-dimensional minimizing
profile, as well as the logarithmic decay beyond the core region for very
soft films \cite{Mel}. Linearized stability of the one-dimensional N\'{e}el
wall with respect to one-dimensional perturbations {in a reduced thin film
model was} proved in \cite{CMO}. Asymptotic stability of one-dimensional N%
\'{e}el walls with respect to large two-dimensional perturbations {in a
reduced two-dimensional thin film model was} demonstrated in \cite{DKO}.

Recently, Chermisi and Muratov studied the reduced one-dimensional energy in
the presence of an applied in-plane magnetic field in the direction
perpendicular to the easy axis \cite{CM}. They expressed the magnetic energy
in terms of the phase angle rather than the usual two-dimensional unit
vector representation of the magnetization. They obtained uniqueness and
strict monotonicity of the angle variable for the minimizing N\'{e}el wall
structure. Moreover, they proved precise asymptotic behavior of the
minimizing N\'{e}el wall profiles at infinity{{\footnote{{{We point out that
the proof of the asymptotic decay of the N\'{e}el wall profiles in \cite{CM}
contained an error, which, however, does not affect the result. For the
reader's convenience, we present the correction in the appendix.}}}}}. The
associated Euler-Lagrange equation in their setting is expressed as an
ordinary differential equation for the phase angle with a nonlocal term
present.

We note that while from the physical point of view the N\'{e}el walls are
believed to be the energy minimizing configurations of the magnetization
connecting the two oppositely oriented domains in uniaxial films, it is
natural to ask whether other, metastable N\'{e}el wall-type configurations
connecting the two domains, are also possible. For example, in the presence
of a transverse in-plane magnetic field one can distinguish normal and
reverse domain walls, which differ by the rotation sense of the
magnetization \cite{PH}. Clearly, the reverse domain wall is not an energy
minimizer, since the magnetization vector opposes the applied field in such
a wall. Still, in view of the highly nonlinear and non-local character of
the problem it is not a priori clear whether there could exist other
one-dimensional domain wall profiles connecting the domains of opposing
magnetization which are only local, but not global minimizers of the
micromagnetic energy.

In this paper, we follow the variational setting introduced in
\cite{CM} and consider the \emph{critical points} of the associated
energy functional which are monotone {in the angle variable}. We prove
that any monotone critical point of the reduced one-dimensional energy
is unique (up to translations) and, therefore, is the
  minimizer. Thus, we establish that monotone one-dimensional
  magnetization profiles that are not global energy minimizers do not
  exist, corroborating the expected physical picture. This also
provides a better understanding of the {results of the} numerical
solution of the considered problem and allows to conclude that the
obtained one-dimensional profiles (see, e.g., \cite{MO}) indeed
correspond to the N\'eel walls. In addition, we address the
  question of uniform regularity of the critical points of the
  one-dimensional energy and establish uniform bounds and, hence,
  decay of all the derivatives of such solutions at infinity. This
  result can be applied to other types of domain wall profiles of
  interest, such as those of the 360$^\circ$ walls
  \cite{mo:jap08,KKM}.

The rest of this paper is organized as follows: In section two, we
recall some basic facts about the micromagnetic energy and the reduced
one-dimensional energy in the presence of an applied in-palne field
oriented normally to the easy axis. The main results are stated
at the end of section two. The proof of the uniqueness theorem is
presented in sections three, and the proof of the uniform
  estimates for the derivaties of domain wall solutions is given in
  section four. Finally, we briefly revisit the question of the decay
of N\'eel walls at infinity in the Appendix.

\section{Variational {setting} and statement of the main result}

In this paper, we are interested in the analysis of magnetization
configurations in thin uniaxial ferromagnetic films of large extent with in
plane easy axis and applied in-plane field normal to the easy axis. The
energy functional related to such a system, introduced by Landau and
Lifschitz, can be written in CGS\ units as a combination of five terms: 
\begin{eqnarray}
E\left( \mathbf{M}\right) &=&\frac{A}{2\left\vert M_{s}\right\vert ^{2}}%
\int_{\Omega }\left\vert \nabla \mathbf{M}\right\vert ^{2}dr+\frac{K}{%
2\left\vert M_{s}\right\vert ^{2}}\int_{\Omega }\Phi \left( \mathbf{M}%
\right) dr-\int_{\Omega }\mathbf{H}_{\text{ext}}\cdot \mathbf{M}\,dr  \notag
\\
&&+\frac{1}{2}\int_{\mathbb{R}^{3}}\int_{{{\mathbb{R}^{3}}}}\frac{\nabla
\cdot \mathbf{M}\left( \mathbf{r}\right) \nabla \cdot \mathbf{M}\left( 
\mathbf{r^{\prime }}\right) }{\left\vert \mathbf{r-r^{\prime }}\right\vert }%
\,dr\,dr^{\prime }+\frac{M_{s}^{2}}{2K}\int_{\Omega }\left\vert \mathbf{H}_{%
\text{ext}}\right\vert ^{2}dr.  \label{fullenergy}
\end{eqnarray}%
Here, $\Omega \subset \mathbb{R}^{3}$ is the domain occupied by the
ferromagnetic material, $\mathbf{M}:\mathbb{R}^{3}\rightarrow \mathbb{R}^{3}$
is the magnetization vector that satisfies $\left\vert \mathbf{M}\right\vert
=M_{s}$ in $\Omega $ and $\mathbf{M}=0$ outside $\Omega $, the positive
constants $M_{s}$, $A$ and $K$ are the material parameters denoting the
saturation magnetization, exchange constant and the anisotropy constant,
respectively, $\mathbf{H}_{\text{ext}}$ is the applied external field, and $%
\Phi :\mathbb{R}^{3}\rightarrow \mathbb{R}$ is \ a nonnegative potential
which vanishes at finitely many points. {The divergence of} $\mathbf{M}$ in
the double integral is understood in the distributional sense. The five
terms in $\left( \ref{fullenergy}\right) $ represent the exchange energy,
the anisotropy energy, the Zeeman energy, the stray-field energy and an
inessential constant term added for convenience.

In the case of extended monocrystalline thin films with an in-plane easy
axis we have $\Omega =\mathbb{R}^{2}\times \left( 0,d\right) $. Without loss
of generality, we shall assume that the easy axis is in the $\mathbf{e}_{2}$
direction. Here $\mathbf{e}_{i}$ is the unit vector in the $i$-th coordinate
direction. For moderately soft thin films, a \textit{reduced thin} \textit{\
film energy }has been derived \cite{DKMO1,DKMO2,MO}, {providing a
significant simplification to the considered variational problem}. For a
better understanding of the parameter regime, we introduce the following
quantities%
\begin{equation*}
l=\left( \frac{A}{4\pi M_{s}^{2}}\right) ^{\frac{1}{2}},\text{ \ }L=\left( 
\frac{A}{K}\right) ^{\frac{1}{2}},\text{ }Q=\left( \frac{l}{L}\right) ^{2},
\end{equation*}%
representing the exchange length, the Bloch wall thickness and the material
quality factor, respectively. For \textit{ultra-thin }and \textit{soft }%
film, we have $d\lesssim l\lesssim L$, balanced as $Ld\sim l^{2}$. We can
then introduce a dimensionless parameter%
\begin{equation*}
\nu =\frac{4\pi M_{s}^{2}d}{KL}=\frac{Ld}{l^{2}}=\frac{d}{l\sqrt{Q}},
\end{equation*}%
{which measures the relative strength of the magnetostatic interaction.} For
the reduced thin film energy, we can write, {{after an appropriate
non-dimensionalization \cite{CM}:}} 
\begin{equation*}
E\left( \mathbf{m}\right) =\frac{1}{2}\int_{\mathbb{R}^{2}}\left( \left\vert
\nabla \mathbf{m}\right\vert ^{2}+\left( \mathbf{m\cdot e}_{1}-h\right)
^{2}\right) dr+\frac{\nu }{8 {\pi}}\int_{\mathbb{R}^{2}}\int_{\mathbb{R}^{2}}%
\frac{ \nabla \cdot \mathbf{m}\left( \mathbf{r}\right) \nabla \cdot \mathbf{m%
} \left( \mathbf{r^{\prime }}\right) }{\left\vert \mathbf{r-r^{\prime }}
\right\vert }\,dr\,dr^{\prime },
\end{equation*}%
where $\mathbf{m}:\mathbb{R}^{2}\rightarrow \mathbb{S}^{1}$ is the unit
magnetization vector in the film plane {and $h$ is the dimensionless applied
magnetic field}.

To study one-dimensional N\'{e}el wall profiles, we assume {further} that $%
\mathbf{m}$ depends only on {{$x = \mathbf{e}_{1} \cdot \mathbf{r}$}}.
Introducing the variable $\theta =\theta \left( x\right)$ that represents
the angle between $\mathbf{m}$ and the easy axis $\mathbf{e}_{2}$ in the
counter-clockwise direction, {we have} 
\begin{equation*}
\mathbf{m}\left( x\right) =\left( -\sin \theta \left( x\right) ,\cos \theta
\left( x\right) \right),
\end{equation*}%
for every $x\in \mathbb{R}$. One can rewrite the energy of such a
magnetization {configuration} per unit length of the wall in terms of $%
\theta $ as 
\begin{equation}
  E\left( \theta \right) =\frac{1}{2}\int_{\mathbb{R}}\left\{ \left(
        {d \theta \over dx} \right)^{2} +\left( \sin \theta -h\right)
    ^{2}+\frac{\nu }{2} 
    (\sin \theta - h)\left( -\frac{d^{2}}{dx^{2}}\right) ^{1/2} (\sin \theta -
    h)\right\} dx.  \label{energytheta}
\end{equation}%
Here $\left( -\frac{d^{2}}{dx^{2}}\right) ^{1/2}$ represents the linear
operator whose Fourier symbol is $\left\vert k\right\vert$ {{and can be
understood as a bounded linear map from $H^1(\mathbb{R})$, modulo additive
constants, to $L^2(\mathbb{R})$.}} Since two distinct global minima of the
energy in \eqref{energytheta} exist only if $\left\vert h\right\vert <1,$ we
shall always assume that $0\leq h<1$ in most of the paper.

\bigskip Let $\eta _{h}\in C^{\infty}( \mathbb{R}, [ 0,\pi])$ be a fixed
nonincreasing function with 
\begin{equation*}
\eta _{h}\left( x\right) =\left\{ 
\begin{array}{cl}
{\theta _{h}} & \text{if }x>1, \\ 
{\pi -\theta _{h}} & \text{if }x<-1,%
\end{array}%
\right. \qquad {\theta _{h}:=\arcsin h,}
\end{equation*}%
and consider an admissible class 
\begin{equation*}
\mathcal{A}:=\left\{ \theta \in H_{\text{loc}}^{1}\left( \mathbb{R}\right)
:\theta -\eta _{h}\in H^{1}\left( \mathbb{R}\right) \right\} .
\end{equation*}%
Note that the definition of $\mathcal{A}$ is independent of the choice of $%
\eta _{h}$. The following result was obtained in \cite{CM} addressing the
uniqueness, strict monotonicity, symmetry properties and decay of
one-dimensional N\'{e}el walls.

\begin{theorem}[\protect\cite{CM}]
  \label{cm}For every $\nu >0$ and every $h\in \left[ 0,1\right)$,
  there exists a minimizer of $E\left( \theta \right) $ in
  $\mathcal{A}$, which is unique (up to translations), strictly
  decreasing with the range equal to $%
  \left( \theta _{h},\pi -\theta _{h}\right) $
  and is smooth. Moreover, if $%
  \theta $
  \ is a minimizer satisfying
  $\theta \left( 0\right) =\frac{\pi }{2} , $ then
  $\theta \left( x\right) =\pi -\theta \left( -x\right)$, and there
  exists a constant $c>0$ such that
  $\lim_{x\rightarrow \infty }x^{2}\left( \theta \left( x\right)
    -\theta _{h}\right) =c$.
\end{theorem}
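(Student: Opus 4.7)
My plan is to combine the direct method of the calculus of variations with rearrangement arguments and an asymptotic analysis of the Euler--Lagrange integro-differential equation. For existence, set $\psi := \theta - \eta_h \in H^1(\mathbb{R})$ and observe that the functional is coercive in $\psi$: the local potential $(\sin\theta - h)^2$ controls $\psi$ pointwise outside a neighborhood of $\theta_h$ and $\pi - \theta_h$, while the Dirichlet integral controls $\psi'$; the nonlocal term is non-negative since $(-\Delta)^{1/2}$ is a positive operator on $\dot H^{1/2}(\mathbb{R})$. Take a minimizing sequence, extract a weak limit in $H^1_{\mathrm{loc}}(\mathbb{R})$, and combine weak lower semicontinuity of the Dirichlet term, Fatou's lemma for the pointwise-convergent potential, and weak lower semicontinuity of the positive-definite nonlocal quadratic form (via Plancherel) to produce a minimizer.

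For the range and monotonicity, I would first confine the minimizer to values in $[\theta_h, \pi - \theta_h]$ by truncation: replacing $\theta$ with $\min(\pi - \theta_h, \max(\theta_h, \theta))$ weakly decreases the potential without increasing either the Dirichlet or the nonlocal term, and preserves admissibility. Strict monotonicity then follows by applying symmetric decreasing rearrangement to the single-peaked function $\sin\theta - h$ about its maximum point, using Polya--Szego-type inequalities for both $\|\cdot\|_{\dot H^1}$ and the Gagliardo seminorm $\|\cdot\|_{\dot H^{1/2}}$ that generates the nonlocal contribution, while the potential energy is invariant under rearrangement. Once uniqueness up to translations is established, the reflection symmetry $\theta(x) = \pi - \theta(-x)$ under the normalization $\theta(0) = \pi/2$ is forced by the invariance of $E$ under the involution $\theta(\cdot) \mapsto \pi - \theta(-\cdot)$.

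The main obstacle I anticipate is uniqueness. The Euler--Lagrange equation reads
\begin{equation*}
-\theta'' + \cos\theta\,(\sin\theta - h) + \tfrac{\nu}{2}\cos\theta\,(-\Delta)^{1/2}(\sin\theta - h) = 0,
\end{equation*}
whose nonlocal character prevents a direct phase-plane analysis. My plan is to exploit strict monotonicity and apply a sliding argument: given two monotone solutions $\theta_1, \theta_2$ with the same asymptotics, translate $\theta_1$ so that $\theta_1 \leq \theta_2$ on $\mathbb{R}$ with equality somewhere, and invoke a maximum-principle-type statement for the integro-differential operator (leveraging the sign-definiteness properties of $(-\Delta)^{1/2}$ applied to nonnegative functions) to force equality everywhere. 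Smoothness then follows by bootstrapping from the Euler--Lagrange equation: $\theta'' \in L^2$ together with $(-\Delta)^{1/2}(\sin\theta - h) \in L^2$ gives $\theta \in H^2$, after which iteration of elliptic-type regularity yields $\theta \in C^\infty$.

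For the precise $x^{-2}$ decay, linearize about $\theta = \theta_h$ by writing $\phi := \theta - \theta_h$. The local part $-\phi'' + \cos^2\theta_h\,\phi = 0$ admits only exponentially decaying solutions at infinity, so algebraic tails must originate from the nonlocal term. Using the far-field expansion
\begin{equation*}
(-\Delta)^{1/2}(\sin\theta - h)(x) \sim -\frac{1}{\pi x^2}\int_{\mathbb{R}}(\sin\theta - h)(y)\,dy, \qquad |x| \to \infty,
\end{equation*}
valid whenever the first moment exists, and substituting into the Euler--Lagrange equation, the balance at order $x^{-2}$ produces $\phi(x) \sim c/x^2$ and identifies the constant $c$ in terms of the integral of $\sin\theta - h$. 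The principal delicate point here is justifying a priori that the moment is finite, which I would attempt by combining energy estimates with an iterative tail bound from the integral form of the equation.
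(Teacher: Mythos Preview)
This theorem is quoted from \cite{CM} and is not proved in the present paper; the paper only revisits the decay assertion in the appendix and establishes the stronger uniqueness statement of Theorem~\ref{our} for monotone critical points. Your outline for existence, truncation to $[\theta_h,\pi-\theta_h]$, monotonicity via rearrangement, and bootstrap regularity is reasonable and broadly in the spirit of \cite{Mel,CM}.

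Your sliding argument for uniqueness, however, has a real gap. The nonlocal term in the Euler--Lagrange equation is $\cos\theta\,(-d^2/dx^2)^{1/2}\sin\theta$, and on the range $(\theta_h,\pi-\theta_h)$ the map $\theta\mapsto\sin\theta$ is not monotone. An ordering $\theta_1\le\theta_2$ therefore gives no ordering of $\sin\theta_1-\sin\theta_2$, so at a touching point $x_0$ the quantity
\[
\left(-\tfrac{d^2}{dx^2}\right)^{1/2}(\sin\theta_1-\sin\theta_2)(x_0)=-\frac{1}{\pi}\int_{\mathbb R}\frac{\sin\theta_1(y)-\sin\theta_2(y)}{(x_0-y)^2}\,dy
\]
has no definite sign, and the strong maximum principle you invoke does not apply. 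The route taken here (for Theorem~\ref{our}, which subsumes the uniqueness in the cited theorem) avoids comparison altogether: one interpolates linearly in $u=\sin\theta$ by setting $\sin\theta^t=t\sin\theta_1+(1-t)\sin\theta_2$, so that the potential and nonlocal contributions become exactly quadratic in $t$, and then shows (Lemmas~\ref{derivativebound} and~\ref{Ederivative}, with a delicate analysis at the single point where $\theta=\pi/2$) that the Dirichlet term is also strictly convex along this path. Two distinct critical points would force $f(t)=E(\theta^t)$ to satisfy $f'(0)=f'(1)=0$ together with $f''>0$, a contradiction.

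For the $x^{-2}$ decay your heuristic is correct in spirit but, as you anticipate, needs $\sin\theta-h\in L^1$ a priori, which the energy alone does not provide, and your proposed bootstrap is not fleshed out. The appendix here (correcting a separate oversight in \cite{CM}) proceeds instead by inverting the full linearized operator $L$ through its fundamental solution $G$, known from \cite{CM} to satisfy $0<G(x)\le C|x|^{-2}$, and representing the even reflection $\rho-\theta_h$ as $2aG+G\ast f$ with $f\in L^2$; the decay then follows directly from that of $G$ without any moment hypothesis.
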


The Euler-Lagrange equation associated with the functional in $\left( \ref%
{energytheta}\right) $ is given by 
\begin{equation}
 - {d^2 \theta \over dx^2}+ \left( \sin \theta -h\right) {\cos \theta}
  +\frac{\nu }{2}
  \cos \theta \left( -\frac{d^{2}}{dx^{2}}\right) ^{1/2}\sin \theta =0,
\label{thetaequation}
\end{equation}%
with the boundary conditions at infinity%
\begin{equation}
\lim_{x\rightarrow +\infty }\theta \left( x\right) =\theta _{h},\text{ \ \ }
\lim_{x\rightarrow -\infty }\theta \left( x\right) =\pi -\theta _{h}.
\label{bc}
\end{equation}%
Our {main result is} the following uniqueness theoreom.

\begin{theorem}
\label{our}For every $\nu >0$ and every $h\in {\left[ 0,1\right) }$, there
exists a unique $\ $(up to translations) {non-increasing} smooth solution of
\ $\left( \ref{thetaequation}\right) $ which satisfies {the conditions at
infinity in} $\left( \ref{bc}\right) $ and has bounded energy.
\end{theorem}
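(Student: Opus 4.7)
The proof separates into existence and uniqueness. Existence is immediate from Theorem~\ref{cm}: the minimizer produced there is strictly decreasing (hence non-increasing) and smooth, satisfies \eqref{thetaequation}--\eqref{bc}, and has bounded energy by minimality. So only uniqueness requires work.

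The plan for uniqueness is to reformulate $E$ so that it becomes \emph{strictly convex} on a \emph{convex} admissible set. Let $\theta$ be any non-increasing smooth solution of \eqref{thetaequation}--\eqref{bc} with bounded energy. First I would show that $\theta$ is strictly decreasing: a nontrivial flat piece could only occur at $\theta = \pi/2$ (otherwise the local nonlinearity is nonzero, forcing $\theta''\neq 0$), and matching all derivatives at the endpoints of such an interval against the smoothness of the non-local forcing (via the harmonic extension of $\sin\theta-h$ to the half-plane) leads to a contradiction with the boundary conditions at infinity. Hence $\theta$ crosses $\pi/2$ at a single point, which after translation we set to $0$. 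The substitution $v:=\sin\theta-h$ is then a bijection between the class of such normalized monotone $\theta$ and the convex set $\mathcal V$ of centered bumps $v\in H^1_{\mathrm{loc}}(\mathbb R)$ with $0\le v\le 1-h$, $v(\pm\infty)=0$, $v(0)=1-h$, non-decreasing on $(-\infty,0]$ and non-increasing on $[0,\infty)$; the inverse is given branchwise by $\theta=\arcsin(v+h)$ for $x\ge 0$ and $\theta=\pi-\arcsin(v+h)$ for $x\le 0$.

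Using $v'=\cos\theta\cdot\theta'$ and $1-(v+h)^2=\cos^2\theta$, the energy takes the form
\begin{equation*}
E(\theta)=\mathcal{E}(v):=\frac{1}{2}\int_{\mathbb R}\frac{(v')^2}{1-(v+h)^2}\,dx+\frac{1}{2}\int_{\mathbb R}v^2\,dx+\frac{\nu}{4}\int_{\mathbb R}v\left(-\frac{d^2}{dx^2}\right)^{1/2}\!v\,dx.
\end{equation*}
The last two terms are strictly convex quadratic forms of $v$. For the first, the integrand $g(a,b):=b^2/(1-(a+h)^2)$ satisfies
\begin{equation*}
g_{bb}=\frac{2}{1-(a+h)^2}>0,\qquad \det\nabla^2 g=\frac{4b^2}{(1-(a+h)^2)^3}\ge 0,
\end{equation*}
so $g$ is jointly convex in $(a,b)$ and strictly convex in $b$. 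Since two distinct functions in $\mathcal V$ must differ in their derivatives on a set of positive measure (their difference vanishes at $\pm\infty$), $\int g(v,v')\,dx$ is strictly convex on $\mathcal V$, and hence so is $\mathcal E$.

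To conclude, I would verify that $\theta$ being a critical point of $E$ in $\mathcal A$ translates, under the substitution, into $v$ being a critical point of $\mathcal E$ on $\mathcal V$ (using $\delta v=\cos\theta\cdot\delta\theta$, which forces $\delta v(0)=0$ and leaves $v(0)=1-h$ fixed). Strict convexity then forces $v$ to be the unique minimizer of $\mathcal E$ on $\mathcal V$, which by Theorem~\ref{cm} exists and corresponds to the N\'eel wall profile $\theta^{\ast}$; thus $\theta=\theta^{\ast}$ up to translation. The main obstacle I anticipate is the careful treatment of the peak $x=0$, where $v=1-h$ and $v'=0$ produce an indeterminate $0/0$ in $g(v,v')$ and where the constraint $v\le 1-h$ is active: the correspondence between the Euler--Lagrange equations of $E$ and $\mathcal E$ must be verified in a Sobolev framework that accommodates this mild degeneracy, and the preceding strict monotonicity step requires a regularity argument for solutions of the integro-differential equation \eqref{thetaequation}.
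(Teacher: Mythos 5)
Your strategy is in essence the same as the paper's: both exploit the hidden convexity of the energy in the variable $v=\sin\theta-h$. The paper implements this by linearly interpolating $v_t=tv_1+(1-t)v_2$ between two hypothetical solutions (equivalently, choosing $\sin\theta^t=t\sin\theta_1+(1-t)\sin\theta_2$ with the obvious branch choice) and showing that $f(t)=E(\theta^t)$ is $C^2$ with $f''>0$, while $f'(0)=f'(1)=0$ follows from the Euler--Lagrange equation; this is precisely the one-dimensional trace of the strict convexity of your functional $\mathcal E$ along the segment $[v_1,v_2]$. Your explicit reformulation and the Hessian computation for $g(a,b)=b^2/(1-(a+h)^2)$ (which you compute correctly, $\det\nabla^2g=4b^2/(1-(a+h)^2)^3\ge0$ and $g_{bb}>0$) are a cleaner conceptual packaging of the same idea.

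The gap is exactly where you flag it, and it is not a minor technicality: the degeneracy at $x=0$ is where essentially all of the paper's work is done. In the paper's variables, $\theta^t=\arcsin(tv_1+(1-t)v_2+h)$ loses differentiability at $x=0$ because $\arcsin$ fails to be Lipschitz at $1$; one must prove by hand that $\theta^t_x$, $\theta^t_{xt}$, $\theta^t_{xtt}$ extend continuously to $x=0$ and satisfy the uniform bound $|\cdot|\le K(|\theta_{1x}|+|\theta_{2x}|)$ (Lemma~\ref{derivativebound}), which is what licenses dominated convergence in Lemma~\ref{Ederivative} and yields $f\in C^2[0,1]$. In your language, this is showing that the first integrand of $\mathcal E$ extends continuously across the point where $1-(v+h)^2=0$, that the convex set $\mathcal V$ and the domain of $\mathcal E$ are compatible, and that the constrained criticality of $v$ in $\mathcal V$ follows from the unconstrained criticality of $\theta$ (here the observation $\delta v(0)=\cos\theta(0)\cdot\delta\theta(0)=0$ is the right one, and in the paper this manifests as $\theta^t_t|_{t=0}(0)=0$ in \eqref{thetatt}). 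Your preliminary step — ruling out a flat piece so that the peak is a single point — is also not complete as sketched: on a flat interval the local and nonlocal terms could in principle cancel, so ``the local nonlinearity is nonzero'' does not by itself force $\theta''\neq0$; the paper sidesteps this by invoking the regularity and strict monotonicity arguments of \cite{CM}, which would need to be shown to apply to arbitrary finite-energy critical points, not just minimizers. So: right idea, same route as the paper, but the proposal leaves open precisely the two lemmas (strict monotonicity/single crossing, and $C^2$-in-$t$ regularity with uniform bounds near $x=0$) that constitute the actual proof.
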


\noindent Thus, the only possible monotone N\'eel wall profile is that of
the minimizer of the energy in \eqref{energytheta}, whose existence and
uniqueness was established in Theorem \ref{cm}. This confirms the
long-standing physical intuition that the N\'eel wall profiles observed in
ultrathin uniaxial ferromagnetic films minimize the one-dimensional
micromagnetic energy among all such profiles.

We also obtain the following estimates for the general one-dimensional
domain wall profiles. Here, by a one-dimensional domain wall
  profile we mean a smooth solution of \eqref{thetaequation}
  connecting zeroes of $\sin \theta -h$ at $x = \pm \infty$. From the
estimates in \cite{CMO} or \cite[section 5]{CM}, we know that any
solution $\theta$ of $\left( \ref{thetaequation}\right) $ with bounded
energy is smooth, and it is easy to see that any solution of
  \eqref{thetaequation} with bounded energy should approach a zero of
  $\sin \theta -h$ at infinity. We note that the obtained
estimates also apply to winding domain walls and, in particular, to
$360^{\circ }$ domain walls studied in \cite{mo:jap08,KKM}.

\begin{theorem}
  \label{uniestimate} There exist $C_{i}>0$, $i=1,2,\ldots $, such
  that for any solution $\theta$ of
  $\left( \ref{thetaequation}\right) $ with $%
  E\left( \theta \right) <\infty $ we have
\begin{equation*}
\sup_{x\in \mathbb{R}}\left( \left\vert \frac{d^{i}\theta }{dx^{i}}%
\right\vert \right) \leq C_{i},
\end{equation*}%
where $C_{i}=C_{i}\left( \nu ,h,E\left( \theta \right) \right) $. Moreover,
all the derivatives of $\theta $ vanish at infinity.
\end{theorem}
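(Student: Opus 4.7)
The plan is to prove by induction on $k\ge 1$ that $\theta^{(k)}\in L^2(\mathbb{R})$ with a quantitative bound in terms of $\nu$, $h$, and $E(\theta)$; the uniform pointwise bounds and decay at infinity will then follow from the Sobolev embedding $H^1(\mathbb{R})\hookrightarrow L^\infty(\mathbb{R})$. From $E(\theta)<\infty$ one extracts $\theta'\in L^2(\mathbb{R})$, $\sin\theta-h\in L^2(\mathbb{R})$, and $(-d^2/dx^2)^{1/2}\sin\theta\in L^2(\mathbb{R})$ (the last because $(-d^2/dx^2)^{1/2}$ annihilates constants and the $\dot H^{1/2}$ seminorm of $\sin\theta-h$ is finite); combined with $|\cos\theta|\le 1$, the equation \eqref{thetaequation} then gives the base case $\theta''\in L^2(\mathbb{R})$.

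For the inductive step, assume $\theta^{(j)}\in L^2(\mathbb{R})$ for all $1\le j\le k$ with $k\ge 2$; Sobolev embedding already gives $\theta^{(j)}\in L^\infty(\mathbb{R})$ for $1\le j\le k-1$. Differentiating \eqref{thetaequation} $(k-1)$ times and using $(-d^2/dx^2)^{1/2}=H\,d/dx$, where $H$ is the Hilbert transform, together with $[d/dx,(-d^2/dx^2)^{1/2}]=0$, expresses $\theta^{(k+1)}$ as a polynomial in $\theta,\theta',\ldots,\theta^{(k-1)}$ (with bounded trigonometric coefficients), plus a sum of terms $(\cos\theta)^{(j)}\cdot H\bigl((\sin\theta)^{(m)}\bigr)$ with $1\le m\le k$. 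By Fa\`a di Bruno, each $(\sin\theta)^{(m)}$ is a sum of monomials of the form $\prod_i(\theta^{(i)})^{a_i}$, modulated by bounded trigonometric factors, with $\sum_i i\,a_i=m$. Since $m\le k$, one must have $a_k\le 1$, so at most one linear factor of $\theta^{(k)}$ appears in any monomial; each monomial is thus a product of one $L^2$ factor (either the single $\theta^{(k)}\in L^2$ or some $\theta^{(i)}\in L^2\cap L^\infty$ with $i<k$) with otherwise bounded factors, and therefore lies in $L^2(\mathbb{R})$. Hence $(\sin\theta)^{(m)}\in L^2(\mathbb{R})$, and $L^2$-boundedness of $H$ gives $H\bigl((\sin\theta)^{(m)}\bigr)\in L^2(\mathbb{R})$; the purely local contributions are handled analogously, so $\theta^{(k+1)}\in L^2(\mathbb{R})$, closing the induction.

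Once $\theta^{(i)}\in L^2(\mathbb{R})$ for all $i\ge 1$, the embedding $H^1(\mathbb{R})\hookrightarrow L^\infty(\mathbb{R})$ yields the quantitative bound $\|\theta^{(i)}\|_\infty\le C_i(\nu,h,E(\theta))$, tracing constants through the induction. Moreover, $\theta^{(i+1)}\in L^\infty(\mathbb{R})$ makes $\theta^{(i)}$ Lipschitz, hence uniformly continuous, and a uniformly continuous $L^2$ function necessarily vanishes at $\pm\infty$. The key technicality I anticipate is the Fa\`a di Bruno bookkeeping: the essential observation is that the constraint $\sum_i i\,a_i=m\le k$ prevents any power higher than one of the ``top'' derivative $\theta^{(k)}$ from appearing in $(\sin\theta)^{(m)}$, so the argument can be run purely in $L^2$ (with a Sobolev embedding applied only at the end), thereby avoiding the well-known failure $H^{1/2}(\mathbb{R})\not\hookrightarrow L^\infty(\mathbb{R})$ that would obstruct a direct $L^\infty$ bootstrap of the nonlocal term.
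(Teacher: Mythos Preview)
Your approach is correct and takes a genuinely different route from the paper. One small slip in the base case: you justify $(-d^2/dx^2)^{1/2}\sin\theta\in L^2$ by citing finiteness of the $\dot H^{1/2}$ seminorm of $\sin\theta-h$, but that only places $(-d^2/dx^2)^{1/2}(\sin\theta-h)$ in $\dot H^{-1/2}$, not in $L^2$. The right reason---which you already have in hand via $(-d^2/dx^2)^{1/2}=H\,d/dx$---is that $\theta'\in L^2$ gives $(\sin\theta)'=\theta'\cos\theta\in L^2$, and $H$ is $L^2$-bounded. With this correction the $L^2$ induction, the Fa\`a di Bruno bookkeeping, and the Sobolev/uniform-continuity conclusion all go through as you describe.

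The paper proceeds instead by a direct $L^\infty$ bootstrap on $v=(-d^2/dx^2)^{1/2}\sin\theta$ through its singular-integral representation: the principal-value integral is split at a scale $\delta$, the far part bounded by $2/\delta$ and the near part by $\delta\max|u_{xx}|$; feeding the equation back in bounds $\max|u_{xx}|$ by $C+\tfrac{\nu}{2}\max|v|$, and the choice $\delta=\pi/\nu$ closes the loop to give an explicit pointwise bound on $v$ (a separate energy-type identity, obtained by multiplying the equation by $\theta_x$ and integrating, furnishes the needed uniform bound on $|\theta_x|$). Higher derivatives are then obtained by differentiating and iterating the same scheme. Your argument trades this hands-on pointwise estimate for the $L^2$-boundedness of the Hilbert transform plus Sobolev embedding: it is more systematic---the combinatorics handles all orders uniformly and no scale has to be tuned---while the paper's approach is more elementary and produces more explicit constants at low order. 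Both arguments tacitly rely on the prior smoothness of finite-energy solutions (established in \cite{CM}) to justify the repeated differentiation of the equation.
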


The main idea to prove the uniqueness result is as follows. Given any two
monotone solutions $\theta _{1}$ and $\theta _{2}$ of \eqref{thetaequation}
satisfying \eqref{bc} and $\theta_1(0) = \theta_2(0) = {\frac{\pi }{2}}$,
consider a suitable curve ${\gamma}$ connecting $\theta _{1}$ and $\theta
_{2}$. The curve $\gamma$ is chosen in such a way that any $\theta ^{{t}}\in
\gamma$ satisfies $\sin \theta ^{{t}}=t\sin \theta _{1}+\left( 1-t\right)
\sin \theta _{2}$ {for some $t\in \lbrack 0,1]$.} We {{then show that if $%
f\left( t\right) :=E\left( \theta ^{t}\right) $, then $f\in C^{2}([0,1])$
and $f^{\prime \prime }(t)>0$ for any }}$t\in \left[ 0,1\right] ${{, which
implies {strict} convexity of $f$.}} At the same time, since $\theta _{i}$
are solutions of $\left( \ref{thetaequation}\right) ,$ we must have$\left.
f^{\prime }\left( t\right) \right\vert _{t=0,1}=0,$ which is impossible. {A
similar} argument, {\ utilizing a hidden convexity of the considered energy
functional,} was used {\ recently} in \cite{GM} {to prove uniqueness of
solutions for a very different variational problem.}

The uniform bound theorem relies on the uniform estimate on the
nonlocal term in (\ref{thetaequation}). To obtain the estimate on the
nonlocal term, we used local smoothness of the solutions, together
  with the integral respresentation of the non-local term and
  energy-type estimates for the first derivatives. Decay property of
derivatives of solution at infinity follows directly once we get those
uniform derivative bounds.

\section{Uniqueness of {the} critical point}

Assume {{\ that $\theta _{1}\not\equiv \theta _{2}$}} are two non-increasing
solutions of $\left( \ref{thetaequation}\right) $ satisfying $\left( \ref{bc}%
\right) $ and $E(\theta _{i})<\infty $. By a suitable translation we can
ensure that $\theta _{i}\left( 0\right) =\frac{\pi }{2}$. Let now 
\begin{equation}
\theta ^{t}\left( x\right) {:=}\left\{ 
\begin{array}{cc}
\arcsin \left( t\sin \theta _{1}+\left( 1-t\right) \sin \theta _{2}\right) 
& x\geq 0, \\ 
\pi -\arcsin \left( t\sin \theta _{1}+\left( 1-t\right) \sin \theta
_{2}\right)  & x<0.%
\end{array}%
\right.   \label{thetat}
\end{equation}%
From the arguments of \cite{CM}, we know that $\theta _{i}$ are smooth
and $d \theta _{i} / dx <0$ on $\mathbb{R}$. We first prove the
following Lemma regarding differentiability of $\theta ^{t}$. We note
that the latter is not obvious a priori, since the definition of
$\theta ^{t}$ involves the arcsine function, which is \emph{not}
differentiable when its argument equals $\frac{\pi }{2}$. This could
potentially create problems near $x=0$. In fact, the conclusions
  of this section would clearly be incorrect, if there were multiple points  at which either 
  $\theta_1$ or $\theta_2$ equals $\frac{\pi}{2}$. Indeed, uniqueness of  solutions of
  \eqref{thetaequation} and \eqref{bc} with finite energy is false in view of the
  translational symmetry of the problem.  Therefore, the somewhat
  delicate estimates near $x = 0$ in the lemmas below are not merely
  technical, they are what enables the intuitive arguments of
  \cite{CMO,CM} to be used to establish uniqueness of the solutions
  that are translated so as to equal $\frac{\pi}{2}$ at $x = 0$.

  \medskip In the following, the subscripts $x$ and $t$ denote the
  partial derivatives with respect to the corresponding variables.

\begin{lemma}
  \label{derivativebound} For any $t\in \left[ 0,1\right]$, the
    function $\theta^{t}(x)$ is continuously differentiable with
  respect to $x\in \mathbb{R}$.  For any $x\in \mathbb{R}$,
  $\theta _{x}^{t} (x)$ is twice continuously differentiable with
  respect to $t$ on $\left[ 0,1%
  \right] $,
  with the understanding of one-sided derivatives at the boundary.
  All derivatives $\theta _{x}^{t} (x)$,
  $\theta _{xt}^{t}(x)$ and $\theta _{xtt}^{t}(x)$ are
  continuous functions of $x$ and $t$ separately on
  $\mathbb{R}\times \left[ 0,1\right]$. Moreover, there exists a
  constant $K > 0$ depending only on $\theta _{1}$ and
  $\theta _{2}$ such that for all $x \in \mathbb R$
\begin{eqnarray*}
 \max \left\{ \left\vert \theta _{x}^{t}\left( x\right)
  \right\vert  , \left\vert \theta _{xt}^{t}\left( x\right)
  \right\vert , \left\vert \theta _{xtt}^{t}\left( x\right)
  \right\vert   \right\} 
  \leq   K \left(\left\vert
  {d \theta _{1} \over dx} \left( x\right) \right\vert +
  \left\vert {d \theta _{2} \over dx} \left(
  x\right) \right\vert\right) \text{ \ for all\ }t\in \left[ 0,1\right] .
\end{eqnarray*}
\end{lemma}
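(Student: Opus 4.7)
The plan is to bypass the singularity that $\arcsin$ has at $1$ (which creates the difficulty at $x = 0$, where $\sin\theta_i(0) = 1$) by passing to the half-angle substitution $\alpha_i(x) := \sin((\pi/2 - \theta_i(x))/2)$, so that $\sin\theta_i = 1 - 2\alpha_i^{2}$. Then $\alpha_i \in C^{\infty}(\mathbb R)$, $\alpha_i(0) = 0$, $\alpha_i'(0) = -\tfrac{1}{2}\theta_i'(0) > 0$, $\alpha_i(x)$ has the same sign as $x$, and by strict monotonicity of $\theta_i$ it vanishes only at $x = 0$. Hadamard's lemma therefore gives a smooth factorization $\alpha_i(x) = x\tilde\alpha_i(x)$ with $\tilde\alpha_i \in C^{\infty}(\mathbb R)$ and $\tilde\alpha_i > 0$ everywhere. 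Setting $B(t,x) := t\tilde\alpha_1^{2}(x) + (1-t)\tilde\alpha_2^{2}(x) > 0$, a direct computation gives
\[
 \sin\theta^{t}(x) = 1 - 2x^{2}B(t,x), \qquad \cos\theta^{t}(x) = 2x\sqrt{B(t,x)\bigl(1 - x^{2}B(t,x)\bigr)},
\]
where the prefactor $x$ automatically encodes the correct sign of $\cos\theta^{t}$ on either side of $x = 0$. Since $x^{2}B = t\alpha_1^{2} + (1-t)\alpha_2^{2} \leq 1/2$, we have $1 - x^{2}B \geq 1/2$, so both expressions are jointly $C^{\infty}$ on $[0,1]\times\mathbb R$. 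Defining $\theta^{t}$ as the continuous lift with $\theta^{t}(0) = \pi/2$ then yields a function that is $C^{\infty}$ in $(t,x)$ jointly, and in particular $C^{1}$ in $x$.

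Differentiating $\sin\theta^{t} = 1 - 2x^{2}B$ in $x$ and dividing by the formula for $\cos\theta^{t}$ gives the master identity
\[
 \theta_{x}^{t}(x) = -\,\frac{2\bigl(t\,\tilde\alpha_1(x)\alpha_1'(x) + (1-t)\,\tilde\alpha_2(x)\alpha_2'(x)\bigr)}{\sqrt{B(t,x)\bigl(1 - x^{2}B(t,x)\bigr)}},
\]
valid for every $(t,x)\in[0,1]\times\mathbb R$; at $x = 0$ this reduces to $-\sqrt{A(t)}$ with $A(t) := t(\theta_1'(0))^{2} + (1-t)(\theta_2'(0))^{2}$. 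The numerator is polynomial in $t$ and smooth in $x$, and the denominator is a strictly positive $C^{\infty}$ function of $(t,x)$, so $\theta_x^{t}$ is $C^{\infty}$ jointly in $(t,x)$. Consequently $\theta_x^{t}$ is $C^{2}$ (in fact $C^{\infty}$) in $t$ for each $x$, and $\theta_{xt}^{t}$ and $\theta_{xtt}^{t}$ are jointly continuous on $[0,1]\times\mathbb R$, which is stronger than the separate continuity claimed in the statement.

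For the bound on $|\theta_x^{t}|$, Cauchy--Schwarz applied to $P := \sum_i t_i\tilde\alpha_i\alpha_i'$ combined with $1 - x^{2}B \geq 1/2$ and $|\alpha_i'| \leq |\theta_i'|/2$ yields $|\theta_x^{t}| \leq \sqrt{2}(|\theta_1'| + |\theta_2'|)$. For $\theta_{xt}^{t}$ and $\theta_{xtt}^{t}$, I will differentiate the master identity once and twice in $t$ (noting that $P$ and $B$ are linear in $t$, so $P_{tt} = B_{tt} = 0$), then bound each arising term by the same Cauchy--Schwarz argument, augmented by the uniform ratio bound $c^{-1} \leq \tilde\alpha_1(x)/\tilde\alpha_2(x) \leq c$ on $\mathbb R$, which gives $\tilde\alpha_i^{2}/B \leq c^{2}$ uniformly in $(t,x)$. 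This ratio bound holds because $\tilde\alpha_1/\tilde\alpha_2$ is continuous and positive on $\mathbb R$, equals $\theta_1'(0)/\theta_2'(0)$ at $x = 0$, and tends to $1$ as $x \to \pm\infty$ (since $\alpha_i$ has common endpoint limits at each infinity, determined by $h$ alone). The main technical obstacle is precisely this last step: $t$-differentiation introduces negative powers of $B$, which decays like $1/x^{2}$ at infinity, and without the ratio bound these would blow up; the ratio bound ensures exact compensation by the factors of $\tilde\alpha_i$ carried along in the numerators of $P$, $P_t$, and $B_t$, producing a clean bound of the form $K(|\theta_1'| + |\theta_2'|)$.
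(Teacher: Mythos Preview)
Your approach is correct and genuinely different from the paper's. The paper works directly with the defining arcsine formula: for $x\neq 0$ it writes out $\theta_x^t$, $\theta_{xt}^t$, $\theta_{xtt}^t$ explicitly with powers of $\cos\theta^t$ in the denominator, then at $x=0$ computes each derivative from first principles and checks, via repeated applications of L'H\^opital's rule to quantities like $(1-\sin\theta_i)/x^2$, that the $x\to 0$ limits match. The bounds are obtained from the concavity inequality $|\cos\theta^t|\geq t\sqrt{1-\sin\theta_1}+(1-t)\sqrt{1-\sin\theta_2}$ together with a uniform bound on the ratio $N(x)=\max\bigl(\sqrt{(1-\sin\theta_1)/(1-\sin\theta_2)},\sqrt{(1-\sin\theta_2)/(1-\sin\theta_1)}\bigr)$, handled separately for $|x|<\delta_0$ (again via L'H\^opital) and $|x|\geq\delta_0$ (via strict monotonicity).

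Your half-angle substitution $\alpha_i=\sin\bigl((\pi/2-\theta_i)/2\bigr)$ together with the Hadamard factorization $\alpha_i=x\,\tilde\alpha_i$ absorbs the vanishing of $\cos\theta^t$ at $x=0$ into a smooth, strictly positive expression $B(t,x)$, so that $\theta^t=\arccos\bigl(2x\sqrt{B(1-x^2B)}\bigr)$ is manifestly $C^\infty$ jointly in $(t,x)$; this eliminates the case analysis at $x=0$ entirely and yields a conclusion (joint smoothness) stronger than the separate continuity the paper proves. Your ratio bound $c^{-1}\leq\tilde\alpha_1/\tilde\alpha_2\leq c$ is the exact analogue of the paper's bound on $N(x)$ (indeed $\alpha_1^2/\alpha_2^2=(1-\sin\theta_1)/(1-\sin\theta_2)$), but you obtain it in one stroke from continuity plus the common limits at $0$ and $\pm\infty$, rather than splitting into two regimes. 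What the paper's approach buys is that it is entirely elementary, requiring no auxiliary lemmas beyond L'H\^opital; what yours buys is a cleaner structure, a stronger conclusion, and far less computation.
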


\begin{proof}
\emph{\ }By our assumption, when $x\neq 0$ we have 
\begin{equation*}
0<t\sin \theta _{1}+\left( 1-t\right) \sin \theta _{2}<1
\end{equation*}%
for any $t\in \left[ 0,1\right] $.\emph{\ }Since arcsin($u$) is
differentiable for $u<1,$ chain rule applies when taking derivative of
$\theta ^{t}(x)$ with respect to $x$ at $x\neq 0$ for any
$t\in \left[ 0,1\right]$. From the assumption on $\theta _{i}$ and the
definition of $\theta ^{t}$, we have
\begin{equation*}
  \cos \theta ^{t}(x) =\text{sgn}\left( x\right)
  \sqrt{1-\sin ^{2}\theta^{t}(x) }.
\end{equation*}
Direct calculation then gives
\begin{eqnarray}
  \sin \theta ^{t} 
  &=&t\sin \theta _{1}+\left( 1-t\right) \sin \theta
      _{2}, \notag \\
  (\sin \theta ^{t})_{x} 
  &=&t\theta _{1x}\cos \theta _{1}+\left( 1-t\right)
      \theta _{2x}\cos \theta _{2}, \notag \\
  \theta _{x}^{t} 
  & = & \frac{ t\theta _{1x}\cos \theta
        _{1}+\left( 1-t\right) \theta _{2x}\cos \theta _{2} 
        }{\cos \theta ^{t}}, \qquad x\neq 0.  \label{thetatdx} 
\end{eqnarray}
Observe that when $x\neq 0$, the function $\frac{1}{\cos \theta ^{t}}$
is differentiable with respect to $t$ for any
$t\in \left[ 0,1\right]$. Differentiating
$\left( \ref{thetatdx}\right) $ with respect to $t$, we get for
$x\neq 0$
\begin{eqnarray}
  \theta _{xt}^{t}  
  &=&\frac{\theta _{1x}\cos \theta
      _{1}-\theta _{2x}\cos \theta _{2} }{\cos \theta ^{t}}
\label{thetatdxdt} \\
&&+\frac{\sin \theta ^{t}\left( \sin \theta _{1}-\sin \theta _{2}\right)
\left( t\theta _{1x}\cos \theta _{1}+\left( 1-t\right) \theta _{2x}\cos
\theta _{2}\right) }{\cos ^{3}\theta ^{t}}\text{ \ \ \ }  \notag
\end{eqnarray}
and
\begin{eqnarray}
  \theta _{xtt}^{t}
  &=&3\, \frac{\left( \sin \theta _{1}-\sin
      \theta _{2}\right) ^{2}\sin ^{2}\theta ^{t}\left( t\theta _{1x}\cos \theta
      _{1}+\left( 1-t\right) \theta _{2x}\cos \theta _{2}\right) }{\cos ^{5}\theta
      ^{t}}  \notag \\
  &&+2 \, \frac{\sin \theta ^{t}\left( \sin \theta _{1}-\sin \theta _{2}\right)
     \left( \theta _{1x}\cos \theta _{1}-\theta _{2x}\cos \theta
     _{2}\right) }{\cos ^{3}\theta ^{t}}  \notag \\ 
  &&+\frac{\left( \sin \theta _{1}-\sin \theta _{2}\right) ^{2}\left( t\theta
     _{1x}\cos \theta _{1}+\left( 1-t\right) \theta _{2x}\cos \theta _{2}\right) 
     }{\cos ^{3}\theta ^{t}}\text{\ }.  \label{thetatdxdtt}
\end{eqnarray}

From $\left( \ref{thetatdx}\right) ,$ $\left( %
  \ref{thetatdxdt}\right) $
and $\left( \ref{thetatdxdtt}\right) ,$ continuity of
$\theta _{x}^{t}$, $\theta _{xt}^{t}$ and $\theta _{xtt}^{t}$ with
respect to $x$ for all $x\neq 0$ follows. For $x=0,$\ we calculate the
derivatives of $\theta ^{t}$\ via the definition as follows. By
assumption, we have $0<\theta ^{t}\left( x\right) <\frac{\pi }{2}$
when $x>0$ and ${\frac{\pi }{2}}<$\
$\theta ^{t}\left( x\right) <\pi $\ when $x<0$. From this we obtain
\begin{eqnarray*}
  &&\lim_{x\rightarrow 0}\frac{\cos \theta ^{t}\left( x\right) }{x}  =
     \lim_{x\rightarrow 0}\frac{\left \vert \cos \theta^t (x)\right
     \vert}{|x|} = \lim_{x\rightarrow 0}\frac{\sqrt{1-\sin ^{2}\theta
     ^{t} (x)}}{|x|} \\ 
  &=&\lim_{x\rightarrow 0}\sqrt{t\left( \frac{1-\sin \theta
      _{1}(x)}{x^{2}}\right) +\left( 1-t\right) \left( \frac{1-\sin
      \theta _{2}(x)}{x^{2}}\right) }\times  \\ 
  &&\lim_{x\rightarrow 0}\sqrt{t\left( 1+\sin \theta _{1} (x)\right) +\left(
     1-t\right) \left( 1+\sin \theta _{2} (x)\right) } \\
  &=&\sqrt{t\theta _{1x}^{2}\left( 0\right) +\left( 1-t\right) \theta
      _{2x}^{2}\left( 0\right) },
\end{eqnarray*}
The last step in the limit above follows from applying L'Hospital's
rule in
\begin{eqnarray}
  \lim_{x\rightarrow 0}\frac{1-\sin \theta _{i}(x)}{x^{2}}
  & = & -
        \lim_{x\rightarrow 0} 
        \frac{\theta _{ix} (x) \cos \theta _{i} (x)}{2x} \notag \\
  & = & \lim_{x\rightarrow 0}\frac{\theta
        _{ix}^{2} (x) \sin \theta _{i} (x)-\theta _{ixx} (x) \cos \theta
        _{i} (x)}{2}=\frac{1}{2}%
        \theta _{ix}^{2}\left( 0\right) .  \label{sinlimit}
\end{eqnarray}

We calculate the derivative of $\ \theta ^{t}(x)$ with respect to $x$
at $x=0$ as follows:
\begin{eqnarray}
  \theta _{x}^{t}\left( 0\right)  
  &=&\lim_{x\rightarrow 0}\frac{\theta
      ^{t}\left( x\right) -\theta ^{t}\left( 0\right) }{x}  \notag \\
  &=&\lim_{x\rightarrow 0}\frac{\theta ^{t}\left( x\right) -\theta ^{t}\left(
      0\right) }{\sin \left( \theta ^{t}\left( x\right) -\theta ^{t}\left(
      0\right) \right) }\times \frac{\sin \left( \theta ^{t}\left( x\right)
      -\theta ^{t}\left( 0\right) \right) }{x}  \notag \\
  &=&\lim_{x\rightarrow 0}\frac{\sin \theta ^{t}\left( x\right) \cos \theta
      ^{t}\left( 0\right) -\sin \theta ^{t}\left( 0\right) \cos \theta ^{t}\left(
      x\right) }{x}  \notag \\
  &=&-\lim_{x\rightarrow 0}\frac{\cos \theta ^{t}\left( x\right) }{x}  
      = -\sqrt{t\theta _{1x}^{2}\left( 0\right) +\left( 1-t\right) \theta
      _{2x}^{2}\left( 0\right) }.  \label{thetatdxzero}
\end{eqnarray}%
Moreover, by $\left( \ref{sinlimit}\right) $ we have 
\begin{eqnarray}
  &&\lim_{x\rightarrow 0}\frac{\cos \theta _{i} (x) }{\cos \theta ^{t} (x)
     }  = \lim_{x\rightarrow 0}\sqrt{\left( \frac{1-\sin \theta _{i}
     (x) }{x^{2}}\right) 
     \left( 1+\sin \theta _{i} (x) \right) }\times   \notag \\
  &&\lim_{x\rightarrow 0}\frac{1}{\sqrt{t\left( \frac{1-\sin \theta
     _{1} (x) }{x^{2}%
     }\right) +\left( 1-t\right) \left( \frac{1-\sin \theta _{2} (x) }{x^{2}}\right) }}%
     \times   \notag \\
  &&\lim_{x\rightarrow 0}\frac{1}{\sqrt{t\left( 1+\sin \theta _{1} (x)
     \right)
     +\left( 1-t\right) \left( 1+\sin \theta _{2} (x)\right) }}  \notag \\
  &=&-\frac{\theta _{ix}\left( 0\right) }{\sqrt{t\theta _{1x}^{2}\left(
      0\right) +\left( 1-t\right) \theta _{2x}^{2}\left( 0\right) }}.
\label{cosratior}
\end{eqnarray}%
where in the last step we used the fact that $\theta _{ix}\left( 0\right) <0.
$ It then follows from $\left( \ref{cosratior}\right) $ that
\begin{eqnarray}
  &&\lim_{x\rightarrow 0}\theta _{x}^{t}\left( x\right)  
     = \lim_{x\rightarrow 0}\frac{\left( t\theta _{1x} (x) \cos \theta
     _{1} (x) +\left(
     1-t\right) \theta _{2x} (x) \cos \theta _{2} (x) \right) }{\cos\theta
     ^{t} (x)}  \notag \\
  &=&-\frac{t\theta _{1x}^{2}\left( 0\right) +\left( 1-t\right) \theta
      _{2x}^{2}\left( 0\right) }{\sqrt{t\theta _{1x}^{2}\left( 0\right) +\left(
      1-t\right) \theta _{2x}^{2}\left( 0\right) }}  = -\sqrt{t\theta
      _{1x}^{2}\left( 0\right) +\left( 1-t\right) \theta 
      _{2x}^{2}\left( 0\right) },  \label{thetatdxlimitzeror}
\end{eqnarray}%
Equations $\left( \ref{thetatdxzero}\right) $ and $\left( \ref%
  {thetatdxlimitzeror}\right) $\
\ imply that $\theta _{x}^{t}(x)$ is continuous at $x=0$ for any
$t\in \left[ 0,1\right] $. Continuity of $\theta^{t}_x$ with respect
to $t$ is obvious from $\left(\ref{thetatdx}\right) $ and
$\left( \ref{thetatdxzero}\right) $.

Next we evaluate $\theta _{xt}^{t}$ at $x=0$. Recall that $%
\theta _{ix}\left( 0\right) <0$ and differentiate $\left( \ref{thetatdxzero}%
\right) $\ with respect to $t$. We get 
\begin{equation}
  \theta _{xt}^{t}\left( 0\right) =\frac{\theta _{2x}^{2}\left(
      0\right) -\theta _{1x}^{2}\left( 0\right) }{2\sqrt{t\theta _{1x}^{2}\left(
        0\right) +\left( 1-t\right) \theta _{2x}^{2}\left( 0\right) }}.
\label{thetatdxdtzero}
\end{equation}%
On the other hand, $\left( \ref{sinlimit}\right) $ yields 
\begin{eqnarray}
  &&\lim_{x\rightarrow 0}\frac{\sin \theta _{1} (x) -\sin \theta _{2}
     (x) }{1-\sin
     ^{2}\theta ^{t} (x)}  \notag \\
  &=&\lim \frac{\frac{\sin \theta _{1} (x) -1}{x^{2}}+\frac{1-\sin
      \theta _{2} (x) }{%
      x^{2}}}{t\frac{\left( 1-\sin \theta _{1} (x) \right) }{x^{2}}+\left( 1-t\right) 
      \frac{1-\sin \theta _{2} (x) }{x^{2}}}\times   \notag \\
  &&\lim_{x\rightarrow 0}\frac{1}{t\left( 1+\sin \theta _{1} (x) \right) +\left(
     1-t\right) \left( 1+\sin \theta _{2} (x) \right) }  \notag \\
  &=&\frac{-\frac{1}{2}\theta _{1x}^{2}\left( 0\right) +\frac{1}{2}\theta
      _{2x}^{2}\left( 0\right) }{t\theta _{1x}^{2}\left( 0\right) +\left(
      1-t\right) \theta _{2x}^{2}\left( 0\right) }  \label{sinratio}
\end{eqnarray}%
Using $\left( \ref{cosratior}\right) $ \ and
$\left( \ref{sinratio}\right) $%
, we evaluate the limit of $\theta _{xt}^{t}$ at $x=0$ as
\begin{eqnarray}
  &&\lim_{x\rightarrow 0} \theta _{xt}^{t}\left( x\right)  
     \notag \\
  &=&\lim_{x\rightarrow 0}\frac{\theta _{1x} (x) \cos \theta
      _{1} (x) -\theta
      _{2x} (x) \cos \theta _{2} (x)  }{\cos \theta ^{t} (x) }  \notag \\
  &&+\lim_{x\rightarrow 0}\frac{\sin \theta ^{t} (x) \left( \sin
     \theta _{1} (x) -\sin
     \theta _{2} (x) \right) \left( t\theta _{1x} (x) \cos \theta _{1}
     (x) +\left( 1-t\right)
     \theta _{2x} (x) \cos \theta _{2} (x) \right) }{\cos ^{3}\theta
     ^{t} (x) }
     \notag \\
  &=&\frac{-\theta _{1x}^{2}\left( 0\right) +\theta _{2x}^{2}\left( 0\right) }{%
      \sqrt{t\theta _{1x}^{2}\left( 0\right) +\left( 1-t\right) \theta
      _{2x}^{2}\left( 0\right) }}  \notag \\
  &&+\lim_{x\rightarrow 0}\frac{\sin \theta _{1} (x) -\sin \theta _{2}
     (x) }{1-\sin
     ^{2}\theta ^{t} (x) }\times \frac{t\theta _{1x} (x) \cos \theta
     _{1} (x) +\left( 1-t\right)
     \theta _{2x} (x) \cos \theta _{2} (x) }{\cos \theta ^{t} (x)}  \notag \\
  &=&\frac{-\theta _{1x}^{2}\left( 0\right) +\theta _{2x}^{2}\left( 0\right) }{%
      \sqrt{t\theta _{1x}^{2}\left( 0\right) +\left( 1-t\right) \theta
      _{2x}^{2}\left( 0\right) }}  \notag \\
  &&-\frac{-\frac{1}{2}\theta _{1x}^{2}\left( 0\right) +\frac{1}{2}\theta
     _{2x}^{2}\left( 0\right) }{t\theta _{1x}^{2}\left( 0\right) +\left(
     1-t\right) \theta _{2x}^{2}\left( 0\right) }\times \frac{t\theta
     _{1x}^{2}\left( 0\right) +\left( 1-t\right) \theta _{2x}^{2}\left( 0\right) 
     }{\sqrt{t\theta _{1x}^{2}\left( 0\right) +\left( 1-t\right) \theta
     _{2x}^{2}\left( 0\right) }}  \notag \\
  &=&\frac{1}{2}\frac{-\theta _{1x}^{2}\left( 0\right) +\theta _{2x}^{2}\left(
      0\right) }{\sqrt{t\theta _{1x}^{2}\left( 0\right) +\left( 1-t\right) \theta
      _{2x}^{2}\left( 0\right) }}.  \notag \\
  \label{thetatdxdtlimitzeror}
\end{eqnarray}%
We conclude from $\left( \ref{thetatdxdtzero}\right) $ and
$\left( \ref%
  {thetatdxdtlimitzeror}\right) $
that $\theta _{xt}^{t}(x)$ is continuous at $x=0$ and continuity of
$\theta _{xt}^{t}(x)$ with respect to $t$ follows from
$\left( \ref{thetatdxdt}\right) $ and $%
\left( \ref{thetatdxdtzero}\right) $.
Lastly, recall that $\theta _{ix}\left( 0\right) <0$ and differentiate
$%
\left( \ref{thetatdxdtzero}\right) $ with respect to $t$. This yields
\begin{equation} 
  \theta _{xtt}^{t}\left( 0\right)
  =\frac{1}{4}\frac{\left( \theta _{2x}^{2}\left( 0\right) -\theta
      _{1x}^{2}\left( 0\right) \right) ^{2}%
  }{\left( \sqrt{t\theta _{1x}^{2}\left( 0\right) +\left( 1-t\right)
        \theta _{2x}^{2}\left( 0\right) }\right)
    ^{3}}.  \label{thetatdxdttzero}
\end{equation}%

To derive continuity of $\theta _{xtt}^{t}(x)$ at $x=0$, we calculate
the limit of $\theta _{xtt}^{t}(x)$ at $x=0$. By $%
\left( \ref{cosratior}\right) $ and $%
\left( \ref{sinratio}\right) $,
\begin{eqnarray}
  &&\lim_{x\rightarrow 0}\theta _{xtt}^{t}\left( x\right)   \notag \\
  &=&\lim_{x\rightarrow 0}\left\{ 3 \, \frac{\left( \sin \theta _{1}
      (x) -\sin \theta 
      _{2} (x) \right) ^{2}\sin ^{2}\theta ^{t} (x) \left( t\theta
      _{1x} (x) \cos \theta
      _{1} (x) +\left( 1-t\right) \theta _{2x} (x) \cos \theta _{2}
      (x) \right)
      }{\cos ^{5}\theta ^{t} (x)}\right.   \notag \\ 
  &&+2 \, \frac{\sin \theta ^{t} (x) \left( \sin \theta _{1} (x) -\sin
     \theta _{2} (x) \right)
     \left( \theta _{1x} (x) \cos \theta _{1} (x) -\theta _{2x} (x) \cos \theta
     _{2} (x) \right) } {\cos ^{3}\theta ^{t} (x)}  \notag \\
  &&+\left. \frac{\left( \sin \theta _{1} (x) -\sin \theta _{2} (x) \right) ^{2}\left(
     t\theta _{1x} (x) \cos \theta _{1} (x) +\left( 1-t\right) \theta
     _{2x} (x) \cos \theta
     _{2} (x) \right) }{\cos ^{3}\theta ^{t} (x) } \right\}   \notag \\
  &=&-3\left( \frac{-\frac{1}{2}\theta _{1x}^{2}\left( 0\right) +\frac{1}{2}%
      \theta _{2x}^{2}\left( 0\right) }{t\theta _{1x}^{2}\left( 0\right) +\left(
      1-t\right) \theta _{2x}^{2}\left( 0\right) }\right) ^{2}\times \sqrt{t\theta
      _{1x}^{2}\left( 0\right) +\left( 1-t\right) \theta _{2x}^{2}\left( 0\right) }
      \notag \\
  &&+2 \, \frac{-\frac{1}{2}\theta _{1x}^{2}\left( 0\right) +\frac{1}{2}\theta
     _{2x}^{2}\left( 0\right) }{t\theta _{1x}^{2}\left( 0\right) +\left(
     1-t\right) \theta _{2x}^{2}\left( 0\right) }\frac{-\theta _{1x}^{2}\left(
     0\right) +\theta _{2x}^{2}\left( 0\right) }{\sqrt{t\theta _{1x}^{2}\left(
     0\right) +\left( 1-t\right) \theta _{2x}^{2}\left( 0\right) }}  \notag \\
  &=&\frac{1}{4}\frac{\left( \theta _{2x}^{2}\left( 0\right) -\theta
      _{1x}^{2}\left( 0\right) \right) ^{2}}{\left( \sqrt{t\theta _{1x}^{2}\left(
      0\right) +\left( 1-t\right) \theta _{2x}^{2}\left( 0\right)
      }\right) ^{3}}. \notag \\
\label{thetatdxdttlimitzeror}
\end{eqnarray}%
Continuity of $\theta _{xtt}^{t}(x)$ at $x=0$ follows from $%
\left( \ref{thetatdxdttzero}\right) $ and $\left( \ref%
  {thetatdxdttlimitzeror}\right) $.
Continuity of $\theta _{xtt}^{t}(x)$ with respect to $t$ variable
follows from $\left( \ref%
  {thetatdxdtt}\right) $ and $\left( \ref{thetatdxdttzero}\right) $.

Finally, we derive the bounds on those derivatives. When $x=0,$
it follows directly from $\left( \ref{thetatdxzero}\right) ,$
$\left( \ref{thetatdxdtzero}%
\right) $ and $\left( \ref{thetatdxdttzero}\right) $ that
\begin{equation}
\left\vert \theta _{x}^{t}\left( 0\right) \right\vert \leq \left\vert
\theta _{1x}\left( 0\right) \right\vert +\left\vert \theta _{2x}\left(
0\right) \right\vert , \label{thetaxzero}
\end{equation}
\begin{equation}
  \left\vert \theta _{xt}^{t}\left( 0\right) \right\vert \leq
  \frac{1}{2}\max \left( \left\vert \frac{\theta _{1x}\left( 0\right)
      }{\theta  
        _{2x}\left( 0\right) }\right\vert ,\left\vert \frac{\theta _{2x}\left(
          0\right) }{\theta _{1x}\left( 0\right) }\right\vert \right) \left(
    \left\vert \theta _{1x}\left( 0\right) \right\vert +\left\vert \theta
      _{2x}\left( 0\right) \right\vert \right) , \label{thetaxtzero}
\end{equation}
\begin{equation}
\left\vert \theta _{xtt}^{t}\left( 0\right) \right\vert \leq \frac{1}{4}\max \left( \left\vert \frac{\theta _{1x}\left( 0\right) }{\theta
_{2x}\left( 0\right) }\right\vert ^3,\left\vert \frac{\theta _{2x}\left(
0\right) }{\theta _{1x}\left( 0\right) }\right\vert ^3\right) \left(
\left\vert \theta _{1x}\left( 0\right) \right\vert +\left\vert \theta
_{2x}\left( 0\right) \right\vert \right) . \label{thetaxttzero}
\end{equation}
To obtain the bound for $x\neq 0,${\ we write} 
\begin{eqnarray*}
\sqrt{1-\left( t\sin \theta _{1}+\left( 1-t\right) \sin \theta _{2}\right)
^{2}} &\geq &\sqrt{t\left( 1-\sin \theta _{1}\right) +\left( 1-t\right)
\left( 1-\sin \theta _{2}\right) } \\
&\geq &t\sqrt{1-\sin \theta _{1}}+\left( 1-t\right) \sqrt{1-\sin \theta _{2}}%
,
\end{eqnarray*}%
where the last inequality follows from the concavity of the
  function $F(s) = \sqrt{s}$. It then follows that for all
{$0\leq t\leq 1$ and $x\not=0$} we have%
\begin{eqnarray}
  &&\left\vert \frac{\left( t\theta _{1x}\cos \theta _{1}+(1-t)\theta
     _{2x}\cos \theta _{2}\right) }{\cos \theta ^{t}} \right\vert  
     \notag \\
  &\leq &\left\vert \theta _{1x}\right\vert \frac{\sqrt{{2}\left( 1-\sin
          \theta _{1}\right) }}{t\sqrt{1-\sin \theta _{1}}+\left( 1-t\right) \sqrt{%
          1-\sin \theta _{2}}}  \notag \\
  &&+\left\vert \theta _{2x}\right\vert \frac{\sqrt{2(1-\sin \theta _{2})}}{t%
     \sqrt{1-\sin \theta _{1}}+\left( 1-t\right) \sqrt{1-\sin \theta _{2}}} 
     \notag \\
  &\leq &\sqrt{2} \, N\left( x\right) \left( \left\vert \theta
          _{1x}\right\vert +\left\vert \theta _{2x}\right\vert \right)
          ,  \label{thetax} 
\end{eqnarray}%
where
\begin{equation*}
N\left( x\right) =\max \left( {{\frac{\sqrt{1-\sin \theta _{1}\left(
x\right) }}{\sqrt{1-\sin \theta _{2}\left( x\right) }},}}\text{ }\frac{\sqrt{%
1-\sin \theta _{2}\left( x\right) }}{\sqrt{1-\sin \theta _{1}\left( x\right) 
}}\right) .
\end{equation*}%
Similarly, we have 
\begin{eqnarray}
  &&\left\vert \frac{\left( \theta _{1x}\cos \theta _{1}-\theta _{2x}\cos
     \theta _{2}\right) }{\cos \theta ^{t}} \right\vert   \notag \\
  &\leq &\left\vert \theta _{1x}\right\vert \frac{\sqrt{{2}\left( 1-\sin
          \theta _{1}\right) }}{t\sqrt{1-\sin \theta _{1}}+\left( 1-t\right) \sqrt{%
          1-\sin \theta _{2}}}  \notag \\
  &&+\left\vert \theta _{2x}\right\vert \frac{\sqrt{2(1-\sin \theta _{2})}}{t%
     \sqrt{1-\sin \theta _{1}}+\left( 1-t\right) \sqrt{1-\sin \theta _{2}}} 
     \notag \\
  &\leq &\sqrt{2} \, N\left( x\right) \left( \left\vert \theta _{1x}\right\vert
          +\left\vert \theta _{2x}\right\vert \right) ,  \label{thetaxt1}
\end{eqnarray}%
\begin{eqnarray}
  &&\left\vert \frac{\left( \sin \theta _{1}-\sin \theta _{2}\right) \left(
     t\theta _{1x}\cos \theta _{1}+\left( 1-t\right) \theta _{2x}\cos \theta
     _{2}\right) }{\cos ^{3}\theta ^{t}}\right\vert   \notag \\
  &\leq &\left\vert \theta _{1x}\right\vert \frac{\left( \sqrt{1-\sin \theta
          _{1}}\right) ^{3}+\sqrt{1-\sin \theta _{1}}\left( 1-\sin \theta _{2}\right) 
          }{\left( t\sqrt{1-\sin \theta _{1}}+\left( 1-t\right) \sqrt{1-\sin \theta
          _{2}}\right) ^{3}}  \notag \\
  &&+\left\vert \theta _{2x}\right\vert \frac{\sqrt{1-\sin \theta _{2}}\left(
     1-\sin \theta _{1}\right) +\left( \sqrt{1-\sin \theta _{2}}\right) ^{3}}{%
     \left( t\sqrt{1-\sin \theta _{1}}+\left( 1-t\right) \sqrt{1-\sin \theta _{2}}%
     \right) ^{3}}  \notag \\
  &\leq &\left( N ^{3}\left( x\right) + N \left( x\right) \right) \left(
          \left\vert \theta _{1x}\right\vert +\left\vert \theta _{2x}\right\vert
          \right) ,  \label{thetaxt2}
\end{eqnarray}%
\begin{eqnarray}
  &&\left\vert \frac{\left( \sin \theta _{1}-\sin \theta _{2}\right) \left(
     \theta _{1x}\cos \theta _{1}-\theta _{2x}\cos \theta _{2}\right)
     }{\cos^3\theta^t} \right\vert   \notag \\
  &\leq &\left\vert \theta _{1x}\right\vert \frac{\left( \sqrt{1-\sin \theta
          _{1}}\right) ^{3}+\sqrt{1-\sin \theta _{1}}\left( 1-\sin \theta _{2}\right) 
          }{\left( t\sqrt{1-\sin \theta _{1}}+\left( 1-t\right) \sqrt{1-\sin \theta
          _{2}}\right) ^{3}}  \notag \\
  &&+\left\vert \theta _{2x}\right\vert \frac{\sqrt{1-\sin \theta _{2}}\left(
     1-\sin \theta _{1}\right) +\left( \sqrt{1-\sin \theta _{2}}\right) ^{3}}{%
     \left( t\sqrt{1-\sin \theta _{1}}+\left( 1-t\right) \sqrt{1-\sin \theta _{2}}%
     \right) ^{3}}  \notag \\
  &\leq &\left( N^3 \left( x\right)+ N \left( x\right) \right)
          \left( \left\vert \theta _{1x}\right\vert +\left\vert \theta
          _{2x}\right\vert \right) ,  \label{thetaxtt1}
\end{eqnarray}%
\begin{eqnarray}
  &&\frac{\left( \sin \theta _{1}-\sin \theta _{2}\right) ^{2}\left( t\theta
     _{1x}\cos \theta _{1}+\left( 1-t\right) \theta _{2x}\cos \theta _{2}\right) 
     } {\cos^5\theta^t}  \notag \\
  &\leq &\left\vert \theta _{1x}\right\vert \frac{\left( \sqrt{1-\sin \theta
          _{1}}\right) ^{5}+\sqrt{1-\sin \theta _{1}}\left( 1-\sin \theta _{2}\right)
          ^{2}}{\left( t\sqrt{1-\sin \theta _{1}}+\left( 1-t\right) \sqrt{1-\sin
          \theta _{2}}\right) ^{5}}  \notag \\
  &&+\left\vert \theta _{2x}\right\vert \frac{\sqrt{1-\sin \theta _{2}}\left(
     1-\sin \theta _{1}\right) +\left( \sqrt{1-\sin \theta _{2}}\right) ^{3}}{%
     \left( t\sqrt{1-\sin \theta _{1}}+\left( 1-t\right) \sqrt{1-\sin \theta _{2}}%
     \right) ^{3}}  \notag \\
  &\leq &\left( N^5 \left( x\right)+ N\left( x\right) \right)
          \left( \left\vert \theta _{1x}\right\vert +\left\vert \theta
          _{2x}\right\vert \right) .  \label{thetaxtt2}
\end{eqnarray}%
By $\left( \ref{sinlimit}\right)$, there exists $\delta _{0}>0$ such
that for $i=1,2$ we have
\begin{equation*}
\frac{1}{4}\theta _{ix}^{2}\left( 0\right) <\frac{1-\sin \theta _{i}\left(
x\right) }{x^{2}}<\frac{3}{4}\theta _{ix}^{2}\left( 0\right) \text{ \ when }%
\left\vert x\right\vert <\delta _{0}.
\end{equation*}%
Therefore, for $\left\vert x\right\vert <\delta _{0}$ we obtain
\begin{equation}
{\max }\left\{ {{\frac{\sqrt{1-\sin \theta _{1}\left( x\right) }}{\sqrt{%
          1-\sin \theta _{2}\left( x\right) }},}}\text{ }\frac{\sqrt{1-\sin \theta
      _{2}\left( x\right) }}{\sqrt{1-\sin \theta _{1}\left( x\right) }}\right\}
<3\max \left\{ \frac{\theta _{1x}\left( 0\right) }{\theta _{2x}\left(
      0\right) },\frac{\theta _{2x}\left( 0\right) }{\theta _{1x}\left( 0\right) }%
\right\},  \label{ratioboundzero}
\end{equation}%
and for $\left\vert x\right\vert \geq \delta _{0},$ we get
\begin{eqnarray}
  &&{\max }\left\{ {{\frac{\sqrt{1-\sin \theta _{1}\left( x\right) }}{\sqrt{%
     1-\sin \theta _{2}\left( x\right) }},}}\text{ }\frac{\sqrt{1-\sin \theta
     _{2}\left( x\right) }}{\sqrt{1-\sin \theta _{1}\left( x\right) }}\right\}  
     \notag \\
  &\leq &\max \left\{ \frac{1-h}{1-\sin \theta _{1}\left( \delta _{0}\right) },%
          \frac{1-h}{1-\sin \theta _{1}\left( -\delta _{0}\right) },\frac{1-h}{1-\sin
          \theta _{2}\left( \delta _{0}\right) },\frac{1-h}{1-\sin \theta _{2}\left(
          -\delta _{0}\right) }\right\} .  \notag \\
  &=&L_{\delta _{0}}.  \notag \\
  \label{ratioboundaway}
\end{eqnarray}%

Let
\begin{equation*}
  M=\max \left\{1, ,L_{\delta _{0}}, 
    \left(\frac{\theta_{1x}(0)}{\theta_{2x}(0)}\right)^3,
    \left(\frac{\theta_{2x}(0)}{\theta_{1x}(0)}\right)^3,  
    3 \, \frac{\theta _{1x}\left( 0\right) }{\theta _{2x}\left( 
        0\right) }, 3 \, \frac{\theta _{2x}\left( 0\right) }{\theta
      _{1x}\left( 0\right) } \right\} .
\end{equation*}
Then equations $\left( \ref{thetatdx}\right) ,$
$\left( \ref{thetatdxdt}\right) ,$
$\left( \ref{thetatdxdtt}\right) ,\left( \ref%
  {thetaxzero}\right) -$
$\left( \ref{thetaxttzero}\right) ,$ $\left( \ref%
  {thetax}\right) -\left( \ref{thetaxtt2}\right) ,\left(
  \ref{ratioboundzero}%
\right) $ and $\left( \ref{ratioboundaway}\right) $ imply
\begin{eqnarray*}
  \left\vert \theta _{x}^{t}\left( x\right) \right\vert  
  &\leq &M (\left\vert
          \theta _{1x}\left( x\right) \right\vert +\left\vert \theta _{2x}\left(
          x\right) \right\vert) , \\
  \left\vert \theta _{xt}^{t}\left( x\right) \right\vert  
  &\leq &\left(
          M^{3}+M\right) \left( \left\vert \theta _{1x}\left( x\right) \right\vert
          +\left\vert \theta _{2x}\left( x\right) \right\vert \right) , \\
  \left\vert \theta _{xtt}^{t}\left( x\right) \right\vert  
  &\leq &\left(
          M^{5}+M\right) \left( \left\vert \theta _{1x}\left( x\right) \right\vert
          +\left\vert \theta _{2x}\left( x\right) \right\vert \right) ,
\end{eqnarray*}%
for all $x \in \mathbb R$. The conclusion then follows by taking
$K=M^{5}+M$.
\end{proof}

\bigskip Recall that
\begin{eqnarray}
  &&E\left( \theta ^{t}\right)   \notag \\
  &=&\frac{1}{2}\int_{\mathbb{R}}\left\{ \left\vert \theta _{x}^{t}\right\vert
      ^{2}+\left( \sin \theta ^{t}-h\right) ^{2}+\frac{\nu }{2}(\sin \theta
      ^{t}-h)\left( -\frac{d^{2}}{dx^{2}}\right) ^{1/2}(\sin \theta
      ^{t}-h)\right\} dx  \notag \\
  &=&\frac{1}{2}\int_{\mathbb{R}}\left( \left\vert \theta _{x}^{t}\right\vert
      ^{2}+\left( t\left( \sin \theta _{1}-h\right) +\left( 1-t\right) \left( \sin
      \theta _{2}-h\right) \right) ^{2}\right) dx  \notag \\
  &&+\frac{\nu }{4} t^{2}\, \int_{\mathbb{R}} \left( \sin \theta _{1}-h\right)
     \left( -\frac{d^{2}}{dx^{2}}\right) ^{1/2}(\sin \theta _{1}-h)dx\,  \notag \\
  &&+\frac{\nu }{2} t\left( 1-t\right) \, \int_{\mathbb{R}}\left( \sin \theta
     _{1}-h\right) \left( -\frac{d^{2}}{dx^{2}}\right) ^{1/2}(\sin \theta
     _{2}-h)dx\,  \notag \\
  &&+\frac{\nu }{4} \left( 1-t\right) ^{2}\, \int_{\mathbb{R}}\left( \sin \theta
     _{2}-h\right) \left( -\frac{d^{2}}{dx^{2}}\right) ^{1/2}(\sin \theta
     _{2}-h)dx\,  \notag \\
  &&  \label{Ethetat}
\end{eqnarray}
We shall write $f\left( t\right) =E\left( \theta ^{t}\right) $ for
shorthand. The following lemma is a direct corollary of Lemma $\ref%
{derivativebound}$.

\begin{lemma}
\label{Ederivative}We have $E\left( \theta ^{t}\right) <\infty $ for all $t
\in [0,1]$. Moreover, $f\left( t\right) $\ is twice continuously
differentiable, and $f^{\prime \prime }\left( t\right) > 0$ on $\left[ 0,1%
\right] $.\ 
\end{lemma}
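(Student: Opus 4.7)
The plan is to work from the explicit decomposition in (\ref{Ethetat}), which already splits $f(t) = E(\theta^t)$ into the exchange integral $g(t) := \frac{1}{2}\int_{\mathbb R} (\theta^t_x)^2\,dx$ plus a sum of terms that are polynomials of degree at most two in $t$ with coefficients given by $t$-independent integrals over $\mathbb R$. For finiteness of $f(t)$, the polynomial pieces are controlled because each $\sin\theta_i - h \in L^2(\mathbb R) \cap \dot H^{1/2}(\mathbb R)$ by $E(\theta_i) < \infty$, and the cross nonlocal integral is bounded by Cauchy--Schwarz for the bilinear form associated with $(-d^2/dx^2)^{1/2}$. The exchange term is finite since Lemma \ref{derivativebound} provides $|\theta^t_x(x)| \leq K(|\theta_{1x}(x)| + |\theta_{2x}(x)|)$ with the right-hand side in $L^2(\mathbb R)$.

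Since the polynomial pieces are trivially $C^\infty$ in $t$, the $C^2$ regularity of $f$ reduces to that of $g$. I would differentiate under the integral sign twice to obtain
\begin{equation*}
g'(t) = \int_{\mathbb R} \theta^t_x\, \theta^t_{xt}\, dx, \qquad g''(t) = \int_{\mathbb R} \bigl[(\theta^t_{xt})^2 + \theta^t_x\, \theta^t_{xtt}\bigr]\, dx,
\end{equation*}
with the interchange justified by dominated convergence: Lemma \ref{derivativebound} controls $|\theta^t_x|, |\theta^t_{xt}|, |\theta^t_{xtt}|$ uniformly in $t$ by $K(|\theta_{1x}|+|\theta_{2x}|) \in L^2(\mathbb R)$, so the relevant integrands and their $t$-difference quotients are dominated by fixed $L^1$ functions. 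Separate continuity in $t$ of these integrands at each $x$, also supplied by Lemma \ref{derivativebound}, transfers to continuity of $g, g', g''$ on $[0,1]$.

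For strict convexity, I would exhibit $f''(t)$ as a sum of three non-negative terms with at least one strictly positive. Second-order differentiation of the local quadratic $(t(\sin\theta_1-h)+(1-t)(\sin\theta_2-h))^2$ contributes $\int_{\mathbb R} (\sin\theta_1 - \sin\theta_2)^2\,dx$, which is strictly positive since $\theta_1 \not\equiv \theta_2$ are strictly monotone and therefore $\sin\theta_1 \not\equiv \sin\theta_2$. The nonlocal quadratic contributes $\frac{\nu}{2}\int_{\mathbb R} (\sin\theta_1 - \sin\theta_2)(-d^2/dx^2)^{1/2}(\sin\theta_1 - \sin\theta_2)\,dx \geq 0$ by positivity of $(-d^2/dx^2)^{1/2}$. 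It remains to show $g''(t) \geq 0$. Writing $(\theta^t_x)^2 = u_x^2/(1-u^2)$, where $u(t,x) = \sin\theta^t = tu_1 + (1-t)u_2$ is affine in $t$, I would verify that $H(a,b) := a^2/(1-b^2)$ has positive semi-definite Hessian on $\mathbb R \times (-1,1)$: a direct calculation gives $H_{aa} = 2/(1-b^2) > 0$ and $H_{aa} H_{bb} - H_{ab}^2 = 4a^2/(1-b^2)^3 \geq 0$. Since convexity is preserved under composition with the affine map $t \mapsto (u_x(t,x), u(t,x))$ at each fixed $x$ and under subsequent integration in $x$, the inequality $g''(t) \geq 0$ follows.

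The principal obstacle is the dominated-convergence step justifying differentiation of $g$ twice under the integral sign, which is precisely what the somewhat delicate pointwise bounds of Lemma \ref{derivativebound} were crafted to enable, especially near $x = 0$ where $\theta^t$ is built piecewise via arcsine. Once that interchange is legitimate, the Hessian computation for $g$ is a short exercise, and strict positivity of $f''$ on $[0,1]$ is already guaranteed by the local anisotropy contribution alone.
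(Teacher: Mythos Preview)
Your argument for finiteness of $E(\theta^t)$ and for the $C^2$ regularity of $f$ via dominated convergence is essentially identical to the paper's: both reduce to differentiating $g(t)=\tfrac12\int(\theta^t_x)^2\,dx$ under the integral sign using the uniform majorant $K(|\theta_{1x}|+|\theta_{2x}|)\in L^2(\mathbb R)$ furnished by Lemma~\ref{derivativebound}.

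The treatment of strict convexity differs. The paper plugs the explicit formulas \eqref{thetatdx}--\eqref{thetatdxdtt} into $(\theta^t_{xt})^2+\theta^t_x\theta^t_{xtt}$ and, after some algebra, rewrites this pointwise as a sum of two explicit squares; positivity of $g''$ is then read off directly. You instead observe that for $x\neq 0$ one has $(\theta^t_x)^2=H(u_x,u)$ with $H(a,b)=a^2/(1-b^2)$ convex on $\mathbb R\times(-1,1)$ and $(u_x,u)$ affine in $t$, which immediately gives $\partial_t^2\big[(\theta^t_x)^2\big]\ge 0$ pointwise away from $x=0$; since $\{0\}$ has measure zero (or by continuity of the integrand), $g''(t)\ge 0$ follows. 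This is correct and cleaner, bypassing the paper's lengthy explicit computation; the paper's route, on the other hand, yields an explicit sum-of-squares decomposition of $f''$ that could be useful for quantitative purposes. Your identification of the anisotropy contribution $\int(\sin\theta_1-\sin\theta_2)^2\,dx>0$ as the source of \emph{strict} positivity, and of the nonlocal contribution $\tfrac{\nu}{2}\|\sin\theta_1-\sin\theta_2\|_{\dot H^{1/2}}^2\ge 0$, matches the paper's conclusion.
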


\begin{proof}
By Lemma \ref{derivativebound} and $\left( \ref{Ethetat}\right) ,$ we have 
\begin{equation*}
E\left( \theta ^{t}\right) \leq K^{2}E\left( \theta _{1}\right)
+K^{2}E\left( \theta _{2}\right) .
\end{equation*}%
Therefore, $f(t)=E(\theta ^{t})$ is well defined. To ensure that
$f(t)$ is sufficiently regular, observe that from
$\left( \ref{Ethetat}\right) $ we can write
\begin{equation*}
E\left( \theta ^{t}\right) =\int_{\mathbb{R}}\frac{1}{2}\left\vert \theta
_{x}^{t}\right\vert ^{2}dx+P_{2}\left( t\right) ,
\end{equation*}%
where $P_{2}\left( t\right) $ is a quadratic polynomial in $t$ with bounded
coefficients depending on $\theta _{1,}$ $\theta _{2}$. The question of
differentiability of $f\left( t\right) $ thus reduces to that of 
\begin{equation*}
g\left( t\right) =\int_{\mathbb{R}}\frac{1}{2}\left\vert \theta
_{x}^{t}\right\vert ^{2}dx.
\end{equation*}%
By Lemma \ref{derivativebound}, for all $x \in \mathbb R$ we have
\begin{equation}
\left\vert \theta _{x}^{t}\left( x\right) \theta _{xt}^{t}\left( x\right)
\right\vert \leq 2K^{2}\left( \theta _{1x}^{2}\left( x\right) +\theta
_{2x}^{2}\left( x\right) \right) \text{ \ for all }t\in \left[
0,1\right], 
\label{dominate1}
\end{equation}%
and 
\begin{equation}
\left\vert \theta _{xt}^{t} (x) \right\vert ^{2}+\left\vert \theta
  _{x}^{t} (x) \theta
_{xtt}^{t} (x) \right\vert \leq 6K^{2}\left( \theta _{1x}^{2}\left( x\right)
+\theta _{2x}^{2}\left( x\right) \right) \text{\ for all \ }t\in \left[ 0,1%
\right].  \label{dominate2}
\end{equation}%
Since $E(\theta _{i})<\infty $ and
$\theta _{x}^{t}\left( x\right) ,\theta _{xt}^{t}\left( x\right) $ and
$\theta _{xtt}^{t}\left( x\right) $ are continuous in $t$ on
$\left[ 0,1\right] $ for each $x\in \mathbb{R}$, we conclude from
dominated convergence theorem and continuity of integral theorem that
for $t\in \left[ 0,1\right] $%
\begin{eqnarray*}
  g^{\prime }\left( t\right)  
  =\int_{\mathbb{R}}\theta _{x}^{t}  \theta _{xt}^{t} \, dx,  \qquad
  g^{\prime \prime }\left( t\right)  
  =\int_{\mathbb{R}}\left( \left\vert
  \theta _{xt}^{t}\right\vert ^{2}+\theta _{x}^{t}\theta _{xtt}^{t}\right) dx,
\end{eqnarray*}%
and that $g^{\prime }\left( t\right) $ and
$g^{\prime \prime }\left( t\right) $ are both continuous on
$\left[ 0,1\right] $. A direct computation then yields
\begin{eqnarray*}
  &&\frac{d^{2}\left( E\left( \theta ^{t}\right) \right) }{dt^{2}} \\
  &=&\int_{\mathbb{R}}\left( \left\vert \theta _{xt}^{t}\right\vert
      ^{2}+\theta _{x}^{t}\theta _{xtt}^{t}\right) dx+\int_{\mathbb{R}}\left( \sin
      \theta _{1}-\sin \theta _{2}\right) ^{2} dx \\
  &&+\frac{\nu }{2}\int_{\mathbb{R}}\left( \sin \theta _{1}-h\right) \left( -%
     \frac{d^{2}}{dx^{2}}\right) ^{1/2}(\sin \theta _{1}-h) \, dx \\
  &&+\frac{\nu }{2}\int_{\mathbb{R}}\left( \sin \theta _{2}-h\right) \left( -%
     \frac{d^{2}}{dx^{2}}\right) ^{1/2}(\sin \theta _{2}-h) \, dx \\
  &=&\int_{\mathbb{R}}\left( \frac{\sin \theta _{1}-\sin \theta _{2}}{
      \sqrt{ \left(  1-\sin ^{2}\theta ^{t} \right) ^{3} } } \, \theta
      ^{t}_x \sin 2 \theta
      ^{t} +\frac{\theta _{1x}\cos
      \theta _{1}-\theta _{2x}\cos \theta _{2}}{\sqrt{1-\sin
      ^{2}\theta ^{t}}} \right) ^{2}dx \\ 
  &&+\int_{\mathbb{R}}\frac{\left( t\theta _{1x}\cos \theta _{1}+\left(
     1-t\right) \theta _{2x}\cos \theta _{2}\right) ^{2}\left( \sin \theta
     _{1}-\sin \theta _{2}\right) ^{2}}{\left( 1-\sin \theta
     ^{t}\right) ^{2}} \, 
     dx \\
  && +\int_{\mathbb{R}}\left( \sin \theta _{1}-\sin \theta _{2}\right) ^{2}dx
     +\frac{\nu }{2}\int_{\mathbb{R}}\left( \sin \theta _{1}-h\right) \left( -%
     \frac{d^{2}}{dx^{2}}\right) ^{1/2}(\sin \theta _{1}-h) \, dx \\
  &&+\frac{\nu }{2}\int_{\mathbb{R}}\left( \sin \theta _{2}-h\right) \left( -%
     \frac{d^{2}}{dx^{2}}\right) ^{1/2}(\sin \theta _{2}-h) \, dx \\
  & > & 0.
\end{eqnarray*}
\end{proof}

\textbf{Proof of Theorem \ref{our}. }Existence and smoothness of solutions
follows from Theorem 1 in \cite{CM}. {{We argue by contradiction and assume
that $\theta _{1}\not\equiv \theta _{2}$}} are two monotone decreasing
solutions of $\left( \ref{thetaequation}\right) $ satisfying $\left( \ref{bc}%
\right) $, together with $E(\theta _{i})<\infty $ and $\theta _{i}\left(
0\right) =\frac{\pi }{2}$. Let $\theta ^{t}$ be defined by $\left( \ref%
{thetat}\right) $ and let $f\left( t\right) =E\left( \theta ^{t}\right) $.
Differentiating $\left( \ref{Ethetat}\right) $ at $t=0,$ we get 

\begin{eqnarray}
  f^{\prime } \left( 0\right)  
  &=&\lim_{t\rightarrow 0^{+}}\int_{\mathbb{R}}\frac{\left\vert \theta
      _{x}^{t}\right\vert ^{2}-\theta _{2x}^{2}}{2t}\,dx+\int_{\mathbb{R}}\left(
      \sin \theta _{2}-h\right) \left( \sin \theta _{1}-\sin \theta
      _{2}\right) \, dx
      \notag \\
  &&-\frac{\nu }{2}\int_{\mathbb{R}}\left( \sin \theta _{2}-h\right) \left( -%
     \frac{d^{2}}{dx^{2}}\right) ^{1/2}(\sin \theta _{2}-h) \, dx  \notag \\
  &&+\frac{\nu }{2}\int_{\mathbb{R}}\left( \sin \theta _{1}-h\right) \left( -%
     \frac{d^{2}}{dx^{2}}\right) ^{1/2}(\sin \theta _{2}-h)\, dx. 
\label{fderivativezero}
\end{eqnarray}
By $\left( \ref{dominate1}\right) $ and dominated convergence theorem,
we have
\begin{eqnarray}
  \lim_{t\rightarrow 0^{+}}\int_{\mathbb{R}}\frac{\left\vert \theta
  _{x}^{t}\right\vert ^{2}-\theta _{2x}^{2}}{2t}\,dx 
  &=&\int_{\mathbb{R}%
      }\lim_{t\rightarrow 0^{+}}\frac{\left\vert \theta _{x}^{t}\right\vert
      ^{2}-\theta _{2x}^{2}}{2t}\,dx  \notag \\
  &=&\int_{\mathbb{R}}\theta _{2x}\left. \theta _{xt}^{t}
      \right\vert _{t=0}dx  = -\int_{\mathbb{R}}\theta
      _{2xx}\left. \theta _{t}^{t} 
      \right\vert _{t=0}dx.  \label{gderivativezero}
\end{eqnarray}%
Here we used the fact that 
\begin{equation*}
\left. \theta _{xt}^{t}\left( x\right) \right\vert _{t=0}=\left\{ 
\begin{array}{cc}
\frac{\theta _{1x}\cos \theta _{1}-\theta _{2x}\cos \theta _{2}}{\cos \theta
_{2}}+\frac{\sin \theta _{2}\left( \sin \theta _{1}-\sin \theta _{2}\right) 
}{\cos ^{2}\theta _{2}}\theta _{2x} & x\neq 0, \\ 
-\frac{\theta _{2x}^{2}\left( 0\right) -\theta _{1x}^{2}\left( 0\right) }{%
2\theta _{2x}\left( 0\right) } & x=0,%
\end{array}%
\right. 
\end{equation*}%
and 
\begin{equation}
\left. \theta _{t}^{t}\left( x\right) \right\vert _{t=0}=\left\{ 
\begin{array}{cc}
\frac{\sin \theta _{1}-\sin \theta _{2}}{\cos \theta _{2}} & x\neq 0, \\ 
0 & x=0,%
\end{array}%
\right.   \label{thetatt}
\end{equation}%
are both continuous on $\mathbb{R}$, which follows from
\eqref{sinlimit}, and
$\frac{\sin \theta _{1}-\sin \theta _{2}}{\cos \theta _{2}}$
approaches zero at infinity. We conclude from
$\left( \ref{fderivativezero}\right) $,
$\left( \ref{gderivativezero}\right) $ and
$\left( \ref{thetatt}\right) $ that
\begin{multline}
  f^{\prime } \left( 0\right) =\int_{\mathbb{R}}\bigg\{ -{d^2 \theta_2
    \over dx^2} +\cos \theta _{2}\left( \sin \theta _{2}-h\right) \\
  +\frac{\nu }{2}\cos \theta _{2}\left( -\frac{d^{2}}{
      dx^{2}}\right) ^{1/2}\sin \theta _{2}\bigg\}
  \left. \frac{d\theta ^{t}}{dt}\left( x\right) \right\vert _{t=0} \,
  dx =0.  \label{frightzero}
\end{multline}%
A similar arguement gives 
\begin{equation}
 f^{\prime } \left( 1\right) =0.  \label{fleftone}
\end{equation}%
On the other hand, it follows from Lemma \ref{Ederivative} that
$f\left( t\right) $ $\in C^{2}\left[ 0,1\right] $ and
$f^{\prime \prime }\left( t\right) >0$ on $\left[
  0,1\right]$. Therefore, one cannot have $\left( \ref%
  {frightzero}\right) $
and $\left( \ref{fleftone}\right) $ to hold at the same time, a
contradiction. \qed
\begin{remark}
  Our proof of uniqueness works as long as $\theta \left( x\right) $
  has range $(\theta_h, \pi-\theta_h)$, satisfies (\ref{bc}) and
  passes through $%
  \frac{\pi }{2}$ only once.
\end{remark}

\section{Uniform bounds and decay of the derivatives}

\subsection{Uniform bound for solutions with bounded energy}

Let 
\begin{equation}
  u\left( x\right) =\sin \theta \left( x\right) -h,\qquad v\left( x\right)
  =\left( -\frac{d^{2}}{dx^{2}}\right) ^{1/2}\sin \theta =\left( - 
    \frac{d^{2}}{dx^{2}}\right) ^{1/2}u(x).  \label{uvdef}
\end{equation}%
We first recall from the proof in section 5, step 2, in \cite{CM} that
any solution $\theta $ of $\left( \ref{thetaequation}\right) $ with
bounded energy is smooth. We shall use this fact for the rest of the
section.

\begin{lemma}
\label{vestimate}Let $\nu >0$, let $h\in \mathbb{R}$ and let $\theta $ be a
solution of \eqref{thetaequation} such that $E\left( \theta \right) <\infty $%
. Then there exists a constant $C=C\left( \nu ,h,E\left( \theta \right)
\right) >0$ such that $\left\vert v\left( x\right) \right\vert \leq C$ for
all $x\in \mathbb{R}$.
\end{lemma}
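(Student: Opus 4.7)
\medskip

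\noindent\textbf{Proof plan.} The strategy is to combine the singular integral representation of $\left(-\tfrac{d^2}{dx^2}\right)^{1/2}$ with Sobolev regularity of $u = \sin\theta-h$ obtained by bootstrapping from the finite-energy condition and the Euler--Lagrange equation \eqref{thetaequation} itself. I would aim for a pointwise bound of the form $|v(x)| \lesssim \|u\|_{L^\infty} + \|u''\|_{L^2}$, and then control both quantities by $\nu$, $h$, and $E(\theta)$.

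First, I would extract regularity. From $E(\theta)<\infty$ we have $\theta'\in L^2(\mathbb{R})$ and $u\in L^2(\mathbb{R})$, with the trivial pointwise bound $|u|\le 1+h$. Plancherel gives $\|v\|_{L^2}^2=\int |k|^2|\widehat u|^2\,dk=\|u'\|_{L^2}^2$, and $|u'|=|\cos\theta||\theta'|\le|\theta'|$, so $\|v\|_{L^2}\le \sqrt{2E(\theta)}$. Rewriting \eqref{thetaequation} as $\theta'' = (\sin\theta-h)\cos\theta+\tfrac{\nu}{2}\cos\theta\,v$ then yields $\theta''\in L^2(\mathbb{R})$, hence $\theta'\in H^1(\mathbb{R})\hookrightarrow L^\infty(\mathbb{R})$ in one dimension with norm controlled by $\nu,h,E(\theta)$. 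Differentiating $u$ twice, $u''=\cos\theta\,\theta''-\sin\theta\,(\theta')^2$, and since $(\theta')^2\in L^2$ (as $\theta'\in L^2\cap L^\infty$), we obtain $u''\in L^2(\mathbb{R})$ with a quantitative bound depending only on $\nu,h,E(\theta)$.

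Next, I would use the symmetric singular integral formula
\begin{equation*}
v(x)=-\frac{1}{2\pi}\int_{\mathbb{R}}\frac{u(x+s)+u(x-s)-2u(x)}{s^2}\,ds,
\end{equation*}
which is valid pointwise for $u\in H^2(\mathbb{R})$, and split the integral at $|s|=1$. For $|s|>1$ the integrand is dominated by $4\|u\|_{L^\infty}/s^2\le 4(1+h)/s^2$, contributing at most $4(1+h)/\pi$ uniformly in $x$. For $|s|<1$, set $\phi_x(s)=u(x+s)+u(x-s)-2u(x)$ and note that $\phi_x(0)=\phi_x'(0)=0$ with $\phi_x'(s)=u'(x+s)-u'(x-s)=\int_{-s}^{s}u''(x+t)\,dt$; Cauchy--Schwarz gives $|\phi_x'(s)|\le\sqrt{2|s|}\,\|u''\|_{L^2}$, whence $|\phi_x(s)|\le\tfrac{2\sqrt 2}{3}|s|^{3/2}\|u''\|_{L^2}$. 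Therefore $\int_{|s|<1}|\phi_x(s)|/s^2\,ds\le C\|u''\|_{L^2}\int_{|s|<1}|s|^{-1/2}\,ds<\infty$, and combining both ranges yields $|v(x)|\le C(\nu,h,E(\theta))$ independent of $x$.

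The main obstacle, and the reason a crude first-order Lipschitz estimate on $u$ is insufficient, is that the singular kernel $1/s^2$ combined with only $|\phi_x(s)|\lesssim\|u'\|_{L^\infty}|s|$ would produce a logarithmically divergent near-field integral. The second-order cancellation in $\phi_x(s)$, together with $L^2$ control of $u''$, supplies the extra half-power of $|s|$ that makes $\int_{|s|<1}|s|^{-1/2}\,ds$ finite. Thus the critical step is establishing $u''\in L^2$ with an energy-only bound, which is exactly where the Euler--Lagrange equation is invoked to close the bootstrap.
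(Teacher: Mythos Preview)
Your proof is correct and takes a genuinely different route from the paper's. Both arguments split the singular integral for $v$ into near- and far-field pieces and use the Euler--Lagrange equation to control second derivatives; the difference lies in how each closes the estimate. The paper bounds the near-field contribution by $\delta\cdot\sup_{[x-\delta,x+\delta]}|u_{xx}|$, then uses the equation pointwise to write $|\theta_{xx}|\le 1+|h|+\tfrac{\nu}{2}|v|$, producing a self-referential inequality $\pi\max|v|\le 2/\delta+(\text{const}+\tfrac{\nu}{2}\max|v|)\delta$ which closes upon choosing $\delta=\pi/\nu$. The paper also obtains $\|\theta_x\|_{L^\infty}$ directly from an energy identity (multiply \eqref{thetaequation} by $\theta_x$ and integrate), bypassing Sobolev embedding. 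You instead first establish $\theta''\in L^2$ from the equation and $v\in L^2$, deduce $\theta'\in L^\infty$ via $H^1\hookrightarrow L^\infty$, and then use the resulting $L^2$ bound on $u''$ together with Cauchy--Schwarz to control the near-field by $C\|u''\|_{L^2}$, which avoids any self-absorbing step. Your approach is a clean linear bootstrap that generalizes readily; the paper's approach yields a fully explicit constant and is slightly more self-contained. One cosmetic point: since $h\in\mathbb{R}$, the pointwise bound should read $|u|\le 1+|h|$ rather than $1+h$.
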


\begin{proof}

Using the identity 
\begin{equation*}
  \left( -\frac{d^{2}}{dx^{2}}\right) ^{1/2}u(x)=\frac{1}{\pi }\,\text{
    p.v.}\int_{\mathbb{R}}\frac{u\left( x\right) -u\left( y\right) }{\left(
      x-y\right) ^{2}}\, dy  ,
\end{equation*}
for every $x\in \mathbb{R}$ and
$u\in C^{\infty }\left( \mathbb{R}\right) \cap L^{\infty
}(\mathbb{R})$,
where p.v. stands for the principal value of the integral, we can
write
\begin{equation*}
v\left( x\right) =\frac{1}{\pi }\text{ p.v.}\int_{\mathbb{R}}\frac{\sin
\theta \left( x\right) -\sin \theta \left( y\right) }{\left( x-y\right) ^{2}}%
dy.
\end{equation*}%
Given $\delta >0,$ we have
\begin{eqnarray}
\pi v\left( x\right) &=&\text{p.v.}\int_{\mathbb{R}}\frac{\sin \theta \left(
x\right) -\sin \theta \left( y\right) }{\left( x-y\right) ^{2}}dy  \notag \\
&=&\int_{x+\delta }^{\infty }\frac{\sin \theta \left( x\right) -\sin \theta
\left( y\right) }{\left( x-y\right) ^{2}}dy+\int_{-\infty }^{x-\delta }\frac{%
\sin \theta \left( x\right) -\sin \theta \left( y\right) }{\left( x-y\right)
^{2}}dy  \notag \\
&&+\,\text{p.v.}\int_{x-\delta }^{x+\delta }\frac{\sin \theta \left(
x\right) -\sin \theta \left( y\right) }{\left( x-y\right) ^{2}}dy.
\label{vexpression}
\end{eqnarray}%
The first two terms are bounded by $\frac{2}{\delta }$ after direct
integration. Since $\theta $ is smooth, it follows from Taylor expansion
that the third term on the right hand side of $\left( \ref{vexpression}%
\right) $ can be bounded by 
\begin{equation}
\left\vert \text{p.v.}\int_{x-\delta }^{x+\delta }\frac{\sin \theta \left(
x\right) -\sin \theta \left( y\right) }{\left( x-y\right) ^{2}}dy\right\vert
\leq \max_{\left[ x-\delta ,x+\delta \right] }\left\vert u_{xx}\right\vert
\delta .  \label{pvestimate}
\end{equation}%
Furthermore, we have 
\begin{eqnarray}
\max_{\left[ x-\delta ,x+\delta \right] }\left\vert u_{xx}\right\vert
&=&\max_{\left[ x-\delta ,x+\delta \right] }\left\vert \theta _{xx}\cos
\theta -\theta _{x}^{2}\sin \theta \right\vert  \notag \\
&\leq &\max_{\left[ x-\delta ,x+\delta \right] }\left\vert \theta
_{xx}\right\vert +\max_{\left[ x-\delta ,x+\delta \right] }\left\vert \theta
_{x}^{2}\right\vert .  \label{utwoderivative}
\end{eqnarray}%

To estimate the first term on the right-hand side of $\left( \ref%
{utwoderivative}\right) ,$ we use $\left( \ref{thetaequation}\right) $ to
obtain 
\begin{equation}
\max_{\left[ x-\delta ,x+\delta \right] }\left\vert \theta _{xx}\right\vert
\leq 1+|h|+\frac{\nu }{2}\max_{\left[ x-\delta ,x+\delta \right] }\left\vert
v\left( x\right) \right\vert .  \label{thetatwoderivative}
\end{equation}%
To obtain a bound on $\theta _{x}$, we observe that since
$E\left( \theta \right) <\infty ,$ there exists a sequence
$\left\{ x_{n}\right\} \rightarrow -\infty $ such that
$\theta _{x}\left( x_{n}\right) $ $%
\rightarrow 0$.
Therefore, multiplying $\left( \ref{thetaequation}\right) $ by
$\theta _{x}$ and integrating from $x_{n}$ to $x,$ we get
\begin{equation}
\left. \frac{1}{2}\theta _{x}^{2}\left( x\right) \right\vert
_{x_{n}}^{x}=\left. \frac{1}{2}u^{2}\left( x\right) \right\vert _{x_{n}}^{x}+%
\frac{\nu }{2}\int_{x_{n}}^{x}v(y)\,du(y). \label{thetaxsq}
\end{equation}%
Since  
\begin{equation*}
  \int_{x_{n}}^{x}v^{2} \, dx\leq \int_{\mathbb{R}}v^2 \, dx =
  \int_{\mathbb{R}}u_{x}^2 \, dx \leq\int_{\mathbb{R}}\theta_x^2 \, dx
  \leq 2 E(\theta),
\end{equation*}
we can bound the integral in the second term on the right hand side of
(\ref{thetaxsq}) as follows
\begin{eqnarray}
  \left\vert \int_{x_{n}}^{x}v(y)\,du(y)\right\vert 
  \leq \left(
  \int_{x_{n}}^{x}v^{2}\,dx\right) ^{\frac{1}{2}}\left(
  \int_{x_{n}}^{x}  u_{y}^{2}dy \right) ^{\frac{1}{2}} 
  \leq 2 E\left( \theta \right).
\end{eqnarray}
Furthermore, since $\left\vert u\left( x\right) +h\right\vert \leq 1$,
we get
\begin{equation*}
  \theta _{x}^{2}(x)-\theta _{x}^{2}(x_{n}) \leq  (1 +
  |h|)^2+2 \nu E\left( \theta \right).
\end{equation*}%
Finally, sending $n\rightarrow \infty $, we obtain 
\begin{equation}
  \left\vert \theta _{x}\left( x\right) \right\vert \leq \sqrt{(1+|h|)^{2}+%
    {2 \nu }E\left( \theta \right) }\text{ for any }x\in \mathbb{R}\text{%
    .}  \label{thetaderivative}
\end{equation}

From $\left( \ref{vexpression}\right) ,\left( \ref{pvestimate}\right) ,$ $%
\left( \ref{utwoderivative}\right) $, $\left( \ref{thetatwoderivative}%
\right) $ and $\left( \ref{thetaderivative}\right) ,$ we thus conclude 
\begin{equation*} 
  \pi \max_{\mathbb{R}}\left\vert v\right\vert \leq
  \frac{2}{\delta }+\left( 1+\left\vert h\right\vert +\frac{\nu
    }{2}\max_{\mathbb{R}}\left\vert v\right\vert + {2 \nu}
    E\left( \theta \right) +  \left( 1+\left\vert h\right\vert \right)
    ^{2}\right) \delta .
\end{equation*}%
Choosing $\delta =\frac{\pi }{\nu },$ we get
\begin{equation*}
  \max_{\mathbb{R}}\left\vert v\right\vert \leq \frac{4\nu }{\pi
    ^{2}}+\frac{ 2 }{\nu }\left( 1+\left\vert h\right\vert +  \left( 1+\left\vert
        h\right\vert \right) ^{2}\right) + 4 E\left(
    \theta \right).
\end{equation*}
\end{proof}

\begin{corollary}
\label{thetaestimate}There exists $C_{i}=C_{i}\left( \nu ,h ,E\left( \theta
\right) \right) > 0$ $\left( i=1,2,\ldots ,\right) $ such that, given any
solution $\theta $ of $\left( \ref{thetaequation}\right) $ with $%
E\left( \theta \right) < \infty$, we have 
\begin{equation*}
\sup_{x\in \mathbb{R}}\left( \left\vert \frac{d^{i}\theta }{dx^{i}}\left(
x\right) \right\vert \right) \leq C_{i}.
\end{equation*}
\end{corollary}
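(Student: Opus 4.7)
The plan is to proceed by induction on $i$. The base cases are immediate: $i=1$ is covered by estimate \eqref{thetaderivative} established in the proof of Lemma \ref{vestimate}, while for $i=2$ the Euler--Lagrange equation \eqref{thetaequation} gives $|\theta_{xx}| \leq (1+|h|) + \tfrac{\nu}{2}|v|$ pointwise, and Lemma \ref{vestimate} supplies the required bound on $v$. For $i \geq 3$, the strategy is to differentiate \eqref{thetaequation} $(i-1)$ times and control the highest-order nonlocal term that appears by reapplying the argument from the proof of Lemma \ref{vestimate} itself.

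More precisely, suppose inductively that uniform bounds on $\theta^{(j)}$ for $1 \leq j \leq i$ and on $v^{(j)}$ for $0 \leq j \leq i-2$ are already known. Differentiating \eqref{thetaequation} $(i-1)$ times and invoking Leibniz' rule gives a representation of the form
$$\theta^{(i+1)} = P_i\bigl(\theta,\theta_x,\ldots,\theta^{(i)};\, v,v_x,\ldots,v^{(i-2)}\bigr) + \tfrac{\nu}{2}\cos\theta \cdot v^{(i-1)},$$
where $P_i$ is polynomial in its arguments with coefficients that are trigonometric functions of $\theta$ and constants depending only on $\nu,h$. By the inductive hypothesis the $P_i$ term is uniformly bounded, so the induction reduces to establishing a uniform bound on $v^{(i-1)}$.

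To obtain such a bound I would apply the splitting argument of Lemma \ref{vestimate} to $v^{(i-1)} = (-d^2/dx^2)^{1/2} u^{(i-1)}$, where $u = \sin\theta - h$: given $\delta > 0$, write
$$\pi v^{(i-1)}(x) = \int_{|y-x|>\delta} \frac{u^{(i-1)}(x) - u^{(i-1)}(y)}{(x-y)^2}\,dy + \text{p.v.}\int_{|y-x|<\delta} \frac{u^{(i-1)}(x) - u^{(i-1)}(y)}{(x-y)^2}\,dy.$$
The far-field integral is bounded by $4\max_{\mathbb{R}}|u^{(i-1)}|/\delta$, which is finite by the inductive bounds together with Leibniz applied to $u = \sin\theta-h$. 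The near-field piece, by a Taylor expansion exactly as in \eqref{pvestimate}, is bounded by $\delta \max_{\mathbb{R}}|u^{(i+1)}|$; using Leibniz once more and the formula for $\theta^{(i+1)}$ above yields $\max|u^{(i+1)}| \leq C + \tfrac{\nu}{2}\max|v^{(i-1)}|$ for some $C = C(\nu,h,E(\theta))$ depending only on the prior inductive bounds. Choosing $\delta = \pi/\nu$, just as in Lemma \ref{vestimate}, absorbs $\max|v^{(i-1)}|$ into the left-hand side and closes the loop, delivering a uniform bound on $v^{(i-1)}$ and thus on $\theta^{(i+1)}$.

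The main obstacle here is essentially bookkeeping: one must verify that at every step the Leibniz expansion does isolate a single top-order nonlocal term $\tfrac{\nu}{2}\cos\theta \cdot v^{(i-1)}$, with all other terms controlled by lower-order derivatives already handled by the induction. Beyond this combinatorial check, no new analytic ingredient is needed --- the small-$\delta$ closing-the-loop mechanism introduced in Lemma \ref{vestimate} does all of the real work at each step of the induction, with the same choice $\delta = \pi/\nu$ working uniformly in $i$.
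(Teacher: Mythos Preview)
Your proposal is correct and follows essentially the same approach as the paper's proof. The paper carries out the $i=3$ case explicitly---differentiating \eqref{thetaequation}, isolating $v_x$, applying the singular-integral splitting with a Taylor bound on the near-field and then closing the loop via the relation between $u_{xxx}$ and $\theta_{xxx}$---and then simply writes ``differentiating repeatedly, we obtain similar estimates for all derivatives''; you have written out the induction that this sentence summarizes, with the same mechanism (splitting at scale $\delta=\pi/\nu$ and absorbing the top-order nonlocal term) doing the work at each step.
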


\begin{proof}
The estimate for $\theta _{x},$ $\theta _{xx}$ follows directly from $\left( %
\ref{thetatwoderivative}\right) ,$ $\left( \ref{thetaderivative}\right) $
and Lemma \ref{vestimate}. To estimate $\theta _{xxx},$ differentiate $%
\left( \ref{thetaequation}\right)$. We have 
\begin{equation*}
  \theta _{xxx}=\theta _{x}\cos ^{2}\theta -\theta _{x}\sin \theta \left( \sin
    \theta -h\right) -v\theta _{x}\sin \theta +v_x \cos \theta.
\end{equation*}%
It then follows that
\begin{equation*}
  \left\vert \theta _{xxx}\right\vert \leq C+\left\vert v_x
  \right\vert .
\end{equation*}
Since 
\begin{equation*}
  \pi v_x \left( x\right) =\text{p.v.}\int \frac{u _{x}\left(
        x\right) - u_{y}\left( y\right) } {\left( x-y\right) ^{2}}dy,
\end{equation*}%
using Taylor expansion, we get 
\begin{equation*}
\pi \left\vert v_x \right\vert \leq \max \left\vert u_{ xxx} \right\vert
\delta +\frac{C}{\delta }.
\end{equation*}%
On the other hand, 
\begin{equation*}
u_{xxx} =\theta _{xxx}-\theta _{x}^{3}\cos \theta -3\theta_{x}\theta
_{xx}\sin \theta.
\end{equation*}%
We can then follow a similar argument as in Lemma \ref{vestimate} to
get a bound on $\left\vert v_{x}\right\vert $ and, thus, a bound on
$\left\vert \theta _{xxx}\right\vert $. Differentiating repeatedly, we
obtain similar estimates for all derivatives.
\end{proof}

Since any solution of (\ref{thetaequation}) with bounded energy is in
$W^{k,2}(\mathbb R)$ for any $k \in \mathbb N$, as a direct
corollary of our bound on the derivatives of $\theta$ we conclude that
any solution of $\left( \ref{thetaequation}\right) $ with bounded
energy must have all its derivatives vanish at infinity. \bigskip

\begin{corollary}
  \label{firstderivative}If $\theta$ is a solution of
  $\left( \ref{thetaequation}\right) $ with bounded energy$,$ then all
  the derivatives of $\theta$ vanish at infinity.
\end{corollary}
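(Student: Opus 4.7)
The plan is to combine the uniform $L^\infty$-bounds on all derivatives furnished by Corollary \ref{thetaestimate} with $L^2$-integrability of $\theta^{(k)}$ for every $k\geq 1$, and invoke the classical fact that a function $f\in L^2(\mathbb R)$ which is uniformly continuous must satisfy $f(x)\to 0$ as $|x|\to\infty$. Uniform continuity of $\theta^{(k)}$ follows immediately from the $L^\infty$-bound on $\theta^{(k+1)}$ provided by Corollary \ref{thetaestimate}, so the entire burden of the proof is reduced to showing $\theta^{(k)}\in L^2(\mathbb R)$ for all $k\geq 1$.

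The base case $k=1$ is trivial: the exchange term in the energy gives $\|\theta_x\|_{L^2}^2\leq 2E(\theta)<\infty$. For $k=2$, I would read off from the Euler--Lagrange equation \eqref{thetaequation} that
\[
\theta_{xx}=(\sin\theta-h)\cos\theta+\tfrac{\nu}{2}\cos\theta\cdot v,
\]
where the first summand is bounded pointwise by $|\sin\theta-h|$, which lies in $L^2(\mathbb R)$ thanks to the anisotropy term $\tfrac12\int(\sin\theta-h)^2\,dx\leq E(\theta)$. For the nonlocal summand, Plancherel's theorem combined with the observation that $(-d^2/dx^2)^{1/2}$ annihilates constants gives
\[
\|v\|_{L^2}=\|(\sin\theta-h)_x\|_{L^2}=\|\theta_x\cos\theta\|_{L^2}\leq\|\theta_x\|_{L^2}\leq\sqrt{2E(\theta)},
\]
so $\theta_{xx}\in L^2(\mathbb R)$.

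For $k\geq 3$ I would proceed by induction, differentiating \eqref{thetaequation} $k-2$ times. Each term on the right-hand side is either a polynomial in $\sin\theta,\cos\theta$ and derivatives $\theta^{(j)}$ with $j\leq k-1$ (which are in $L^\infty$ by Corollary \ref{thetaestimate} and, for $j\geq 1$, in $L^2$ by the inductive hypothesis), or a term of the form $\cos\theta\cdot(-d^2/dx^2)^{1/2}(\sin\theta)^{(k-2)}$ arising from differentiating the nonlocal piece. For the latter, since $(-d^2/dx^2)^{1/2}$ commutes with $\partial_x$, one uses Plancherel again to obtain
\[
\Bigl\|(-d^2/dx^2)^{1/2}(\sin\theta)^{(k-2)}\Bigr\|_{L^2}=\|(\sin\theta)^{(k-1)}\|_{L^2},
\]
and $(\sin\theta)^{(k-1)}$ is, by the Faà di Bruno formula, a polynomial in $\sin\theta,\cos\theta,\theta_x,\ldots,\theta^{(k-1)}$ in which every monomial contains at least one factor $\theta^{(j)}$ with $j\geq 1$; such a monomial is pointwise bounded by a constant (from Corollary \ref{thetaestimate}) times $|\theta^{(j)}|$, which is in $L^2$ by the inductive hypothesis.

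The main, though modest, obstacle is the combinatorial bookkeeping in the induction step: differentiating the nonlinear local terms repeatedly produces a proliferation of products whose $L^2$-integrability has to be tracked. Once the pattern ``every summand contains at least one $\theta^{(j)}$ with $j\geq 1$, and the remaining factors are bounded'' is established, the induction closes, and the corollary follows from the classical $L^2$ plus uniform continuity implies decay lemma applied to each $\theta^{(k)}$.
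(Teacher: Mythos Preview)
Your proposal is correct and follows essentially the same approach as the paper, which simply records (in the sentence preceding the corollary) that finite-energy solutions lie in $W^{k,2}(\mathbb{R})$ for every $k$ and then combines this with the uniform derivative bounds from Corollary~\ref{thetaestimate} to obtain decay. Your inductive verification of $\theta^{(k)}\in L^2$ via repeated differentiation of \eqref{thetaequation} and Plancherel for the nonlocal term is exactly the detail the paper leaves implicit; note only that the Leibniz expansion of $\partial_x^{\,k-2}(\cos\theta\cdot v)$ also produces cross terms $(\cos\theta)^{(m)}\,v^{(k-2-m)}$ with $1\le m\le k-2$, but these are handled by the same bounded-times-$L^2$ argument you give for the $m=0$ term.
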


\noindent The proof of theorem \ref{our} is now complete, combing
corollaries \ref{thetaestimate} and \ref{firstderivative}. \qed

\appendix

\section{Decay of N\'eel walls}

In this section, we revisit the question of the asymptotic decay of
N\'eel wall solutions, whose existence and uniqueness is guaranteed by
Theorem \ref{cm}. Let $\theta$ be the unique minimizer of $E$ in
$\mathcal A$ satisfying $\theta(0) = \frac{\pi}{2}$, and introduce
\begin{align}
  \label{eq:1}
  \rho(x) := 
  \begin{cases}
    \theta(x), & x \geq 0, \\
    \pi - \theta(x), & x < 0.
  \end{cases}
\end{align}
Note that $\rho - \theta_h \in H^1(\mathbb R)$,
$\sin \rho = \sin \theta$, $\cos \rho = \text{sgn}(x) \cos \theta$,
and $\rho(x)$ is a smooth even function of $x$, except at $x = 0$,
where $\rho_x$ undergoes a jump discontinuity.

Proceeding as in Step 4 of the proof of Theorem 1 in \cite{CM}, we
observe that $\rho$ satisfies distributionally
\begin{align}
  \label{eq:2}
  L( \rho(x) - \theta_h) = f(x) + a \delta(x),
\end{align}
where 
\begin{align}
  \label{eq:3}
  L := -{d^2 \over dx^2} + \frac12 \nu \cos^2 \theta_h \left( -{d^2
  \over dx^2} \right)^{1/2} + \cos^2 \theta_h
\end{align}
is a one-to-one linear map from $\mathcal S'(\mathbb R)$ to itself,
which is also a bounded operator from $H^2(\mathbb R)$ to
$L^2(\mathbb R)$, $f \in L^2(\mathbb R)$ is defined in Eq.~(66) of
\cite{CM}, $a = 2 |\theta'(0)| > 0$ and $\delta(x)$ is the Dirac
delta-function. The last term in the right-hand side of \eqref{eq:2}
was inadvertently omitted in \cite{CM}. Nevertheless, its presence
does not affect the rest of the proof. Namely, we invert the operator
$L$ with the help of the fundamental solution $G$ (see \cite[Lemma
A.1]{CM} for an explicit definition and properties of $G$). In
particular, we have
\begin{align}
  \label{eq:4}
  \rho(x) = \theta_h + 2 a G(x) + \int_\mathbb{R} G(x - y) \, f(y) \, dy,
\end{align}
for every $x \in \mathbb R$. The presence of the $2 a G$ term in the
right-hand side of \eqref{eq:4} leaves all the remaining estimates
unchanged, in view of the fact that $0 < G(x) \leq C / |x|^2$, for
some $C > 0$.

\end{document}